\newtheorem{theorem}{Theorem}[section]
\newtheorem{lemma}[theorem]{Lemma}
\newtheorem{problem}[theorem]{Problem}
\newtheorem{proposition}[theorem]{Proposition}
\newtheorem{corollary}[theorem]{Corollary}
\theoremstyle{definition}
\newtheorem{definition}[theorem]{Definition}
\theoremstyle{remark}
\numberwithin{equation}{section}
\newcommand{\R}{{\mathbb{R}}}
\newcommand{\N}{{\mathbb{N}}}
\begin{document}
\begin{abstract}
Cloud computing platforms are being increasingly used for closing feedback control loops, especially when computationally expensive algorithms, such as model-predictive control, are used to optimize performance. Outsourcing of control algorithms entails an exchange of data between the control system and the cloud, and, naturally, raises concerns about the privacy of the control system's data
(e.g., state trajectory, control objective). Moreover, any attempt at enforcing privacy needs to add minimal computational overhead to avoid degrading control performance. In this paper, we propose several transformation-based methods for enforcing data privacy. We also quantify the amount of provided privacy and discuss how much privacy is lost when the adversary has access to side knowledge. We address three different scenarios: a) the cloud has no knowledge about the system being controlled; b) the cloud knows what sensors and actuators the system employs but not the system dynamics; c) the cloud knows the system dynamics, its sensors, and actuators. In all of these three scenarios, the proposed methods allow for the control over the cloud without compromising private information (which information is considered private depends on the considered scenario).
\end{abstract}

\title[Symmetries and isomorphisms for privacy in control over the cloud]{Symmetries and isomorphisms for privacy in control over the cloud*}
\thanks{*The work of the authors was partially supported by the NSF grants 1740047, 1705135 and by the Army Research Laboratory under Cooperative Agreement W911NF-17-2-0196. The views and conclusions contained in this document are those of the authors and should not be interpreted as representing the official policies, either expressed or implied, of the Army Research Laboratory or the U.S. Government. The U.S. Government is authorized to reproduce and distribute reprints for Government purposes notwithstanding any copyright notation here on. }

\author[A. Sultangazin and P. Tabuada]{Alimzhan Sultangazin and Paulo Tabuada}
\address{Department of Electrical Engineering\\
University of California at Los Angeles,
Los Angeles, CA 90095}
\email{\{asultangazin, tabuada\}@ucla.edu}

\maketitle

\section{Introduction}

\subsection{Motivation}
The recent advances in reliability and speed of communication have led to an increased use of cloud-based services, which provide computation and data storage capabilities to clients. Control over the cloud \cite{7575205,6908023,5871633} has numerous advantages, which include easier installation and maintenance \cite{cdc1}, and the availability of global information from all of the cloud's clients when making control decisions. However, the main advantage of control over the cloud is that it allows control systems to outsource expensive computational tasks to the cloud, thus potentially improving the speed of computation and freeing the local computational capabilities for other tasks.

An illustrative example of the benefits of outsourcing computing can be observed in Model Predictive Control (MPC). MPC is a conceptually simple, yet powerful scheme that was adopted in industry for multivariable control \cite{mpc}. MPC inherently involves solving complex constrained optimization problems \emph{on-line} (i.e., within one sampling interval). The work in \cite{7575205} presents an experimental study that shows feasibility of MPC over the cloud for robot control. Another work (see \cite{6908023}) considered the practicality and benefits of cloud-based MPC for a large-scale solar plant. The availability of global information provided by control over the cloud can have many practical benefits, as shown in \cite{5871633}. There, the authors propose a solution to the problem of traffic flow estimation via the cloud.

However, relying on a third-party to perform computation is not without its dangers. Despite the benefits of control over the cloud, a number of studies have shown that exposing existing systems to connectivity may lead to security vulnerabilities in a vast variety of applications \cite{Checkoway:2011:CEA:2028067.2028073, Gollmann:2015:CSS:2732198.2732208, 4596535, Ghena:2014:GLF:2671293.2671300}, including control of process plants, traffic infrastructure, and smart meter systems. Cyber-security attacks vary based on the amount of resources the attacker possesses \cite{teixeira}. One of the most basic attacks that requires little resources is eavesdropping. It can often serve as a stepping stone in the implementation of more complex attacks \cite{sinopoli}. In control over the cloud, eavesdropping involves the adversary listening in to the communication channel between sensors, controllers, and actuators to leak valuable information about the model, the controller, and trajectories \cite{farokhi}. The client is expected to disclose all of this sensitive information to the cloud if it intends to receive valid control inputs from it. For example, we would expect drivers to share their locations, final destinations and, perhaps, dynamics to successfully allow traffic control over the cloud.

Eavesdropping attacks are usually prevented with encryption - the plant and the cloud establish a shared key with which they encrypt transmitted messages and decrypt the received ones. However, if the adversary manages to undermine the security of the cloud (e.g., gain unauthorized access to its memory), this technique can no longer protect the system since the cloud accesses the decrypted data. As stated in \cite{7011179}, traditional IT security provides only a partial solution. Therefore, there is a pressing need for development of control-over-the-cloud methods that do not rely on decryption of the incoming data. Although much effort has been directed to this problem, a universally secure scheme for control over the cloud that could support any client functionality has not yet been created \cite{impossible, andreea}. When solving the problem of private control over the cloud, two other important concerns need to be accounted for: efficiency and safety. Privacy cannot come at the cost of degradation of control performance either due to delays in the feedback loop or inaccurate control inputs.

\subsection{Related work}

The body of work on privacy in control over the cloud can be categorized into methods based on homomorphic encryption, differential privacy, and algebraic transformations.

When using homomorphic encryption techniques, the cloud is able to perform the computations on encrypted data without the need to decrypt it \cite{Armknecht2015AGT}. As a result, the cloud can implement optimization algorithms using fully homomorphic encryption (FHE), as done in \cite{7403296, 7244402}. However, FHE is inefficient in terms of execution time \cite{Armknecht2015AGT}, which makes it impractical for online optimization. Therefore, partially homomorphic encryption (PHE), a simpler form of FHE only allowing for a subset of operations to be performed on encrypted data, has become more popular in connection to privacy in control over the cloud. While PHE methods are shown to be feasible and are able to provide privacy guarantees \cite{FAROKHI2016163,farokhi, cdc1,7799042,andreea, 2018arXiv180902267A}, execution time, which grows disproportionally with an increase in key length \cite{farokhi, andreea}, remains a valid concern in these methods. A consequence of this is that using homomorphic encrypion may potentially lead to instability in the controlled system due to processing delays. To address this problem, some works (see \cite{farokhi}) have shown that encryption parameters can be chosen to ensure stability of the closed-loop performance, thus providing a natural trade-off between security and control performance. The practical feasibility of encrypted control systems has been validated in \cite{8452372} by considering control of a DC motor in real time.

Inspired by studies in privacy of databases, the problem of privacy in control over the cloud has also been approached from the standpoint of differential privacy (see \cite{7798915, 8263806}). This technique ensures that the risk of losing privacy of a single user\textquotesingle s data by means of data queries is low. The main idea of these methods is to perturb the response to a data query with appropriate noise \cite{10.1007/11787006_1}. However, to achieve more privacy, the user must sacrifice accuracy (i.e., add more noise), which, in the context of control, degrades the control performance.

The ideas behind algebraic transformation methods have initially stemed from works on privacy in optimization. The idea is to use algebraic transformations to produce a different, but equivalent optimization problem. In other words, although the cloud does not know the original optimization problem, it can provide the client with an optimal solution to an equivalent optimization problem from which the client is able to recover the optimal solution to the original problem. Although initially these methods found application exclusively in linear programs \cite{mangasarian, 5935305}, several efforts have been directed to providing a unified framework and generalizing them to convex optimization problems (see \cite{weeraddana, WEERADDANA20179502}). The work in \cite{weeraddana} also shows one of the first attempls to define and \emph{quantify} privacy of transformation-based methods. Algebraic transformation methods found applications in control due to their efficiency and guaranteed optimality of the solution \cite{WEERADDANA20179502}. For example, in \cite{Xu:2015:SRC:2808705.2808708} the authors propose a hybrid transformation-based method to preserve privacy of an MPC controller in networked control systems. In \cite{7438892}, transformation-based methods are used to provide privacy in a specific problem AC Optimal Power Flow.

\subsection{Contributions}
This paper focuses on the use of transformation-based methods to preserve privacy of the system dynamics, control objective and constraints, and system trajectories. The contributions of this paper are fourfold:
\begin{enumerate}
\item we propose using isomorphisms and symmetries of control systems as a source of transformations so as to keep data private;
\item we quantify the privacy guaranteed by these methods via the dimension of the set that describes the uncertainty experienced by the adversary;
\item we quantify how much privacy is lost when the adversary is assumed to have access to side knowledge;
\item we show that the proposed method is computationally light as it only requires matrix multiplications.
\end{enumerate}
The method proposed in this paper was initially introduced in \cite{alimzhan}. In \cite{8487355}, it was extended to networked control systems with several agents requesting control input from a single cloud. In \cite{Sultangazin2019}, the dimension of the set describing the uncertainty experienced by the adversary was proposed as a measure of privacy for this method and was evaluated for the special case of free group actions. This paper provides a unified presentation of the results in \cite{alimzhan, Sultangazin2019} with simpler proofs and several new results, such as the bounds on privacy when the group action is not free and an exact quantification of privacy for prime systems.

While privacy quantification in optimization has been studied in \cite{WEERADDANA20179502}, this work considers how much privacy is preserved in the more challenging context of control. Moreover, the measure of privacy proposed in this work has been chosen to be suitable for problems of optimization in control systems and, therefore, is different from any of those proposed in \cite{WEERADDANA20179502}. Although the application of transformation-based methods in control has been previously discussed in \cite{Xu:2015:SRC:2808705.2808708}, the scheme proposed there only considers a special case, where the cloud optimizes the weighted sum of the norms of the input and state, and the state is taken to be the output of the system. Our algorithm can be applied to a wider class of problems as we allow for arbitrary quadratic costs, linear constraints and outputs different from the state.

\section{Problem Definition}
\label{problem}
\subsection{Plant dynamics and control objective}

We consider discrete-time affine plants, denoted by $\Sigma$, and described by:
\begin{equation}
\begin{aligned}
\Sigma : \quad
\end{aligned}
\begin{aligned}
\label{eq:tru_sys}
\bar{x}_{k+1} & =\bar{A} \bar{x}_k+\bar{B} u_k+\bar{c}\\
\bar{y}_k & = \bar{C} \bar{x}_k+ \bar{d}, 
\end{aligned}
\end{equation}
where $\bar{A} \in\mathbb{R}^{n\times n}$, $\bar{B} \in\mathbb{R}^{n\times m}$,
$\bar{C} \in\mathbb{R}^{p\times n}$, $\bar{c} \in\mathbb{R}^{n}$, and $\bar{d} \in\mathbb{R}^{p}$ describe the dynamics of the system,
and $\bar{x}_k \in \mathbb{R}^{n}$, $u_k \in \mathbb{R}^{m}$ and $\bar{y}_k \in \mathbb{R}^{p}$
denote the state, input and output of the system at time $k$, respectively. We assume that system $\Sigma$ is controllable and observable. We also assume, without loss of generality, that $\text{ker }\bar{B} = \{ 0 \}$ and $\text{Im } \bar{C} = \R^p$, since we can always eliminate linearly dependent columns (resp. rows) from $\bar{B}$ (resp. $\bar{C})$.

To simplify notation, we lift every affine map $Wx+v$ to a linear map through the following construction:
\begin{equation}
\begin{aligned}
\label{eq:affine-linear}
Wx + v \mapsto \begin{bmatrix} W & v \\ 0 & 1 \end{bmatrix} \begin{bmatrix} x \\ 1 \end{bmatrix}.
\end{aligned}
\end{equation}

Applying \eqref{eq:affine-linear} to \eqref{eq:tru_sys}:
\begin{equation}
\begin{aligned}
\label{eq:tru_sys2}
x_{k+1} &\triangleq
\begin{bmatrix}
\bar{x}_{k+1}\\1
\end{bmatrix} = 
\begin{bmatrix}
\bar{A} & \bar{c}\\
0 & 1
\end{bmatrix}
\begin{bmatrix}
\bar{x}_k\\1
\end{bmatrix} + 
\begin{bmatrix}
\bar{B}\\0
\end{bmatrix}u_k \\
 &\triangleq Ax_k + Bu_k \\
y_k&\triangleq \begin{bmatrix} \bar{y}_k\\1\end{bmatrix}  = 
\begin{bmatrix}
\bar{C} & \bar{d} \enspace \\ 0 & 1 \enspace
\end{bmatrix}
\begin{bmatrix}
\bar{x}_k\\1
\end{bmatrix}\triangleq Cx_k.
\end{aligned}
\end{equation}

In the remainder of the paper we suppress the inner structure for simplicity and represent all the systems in the linear form \eqref{eq:tru_sys2}. However, the reader is advised to remember that we are dealing with affine maps. This is also true for the affine maps we will use to define isomorphisms. 

We refer to system \eqref{eq:tru_sys2} as the triple $\Sigma=(A,B,C)$. We call a triple $\lbrace x_k,u_k,y_k \rbrace_{k\in\N}$ a trajectory of $\Sigma$ if it satisfies \eqref{eq:tru_sys} for all $k \in \mathbb{N}$.

Additionally, we define a cost function \mbox{$J: \mathbb{R}^n \times (\mathbb{R}^m)^{N+1} \rightarrow \mathbb{R}$} for $N\in \mathbb{N}\cup \lbrace +\infty \rbrace$ that allows to compare trajectories and, thus, to formulate different control objectives. In alignment with the linear framework, we consider quadratic cost functions given by:

\begin{align}
\label{eq:cost}
J(x,u) = & \sum^{N}_{i=0}\Delta \eta_i^T M \Delta \eta_i,
\end{align}
where $\Delta\eta_i=\begin{bmatrix}x_i-x^*_i & u_i-u^*_i\end{bmatrix}^T$, $x = \{ x_0, ..., x_N \}$ and $u = \{ u_0, ..., u_N \}$. The sequences $x^* = \{ x_0^*, ..., x^*_N \}$ and $u^* = \{ u_0^*, ..., u^*_N \}$ denote the reference trajectories to be tracked. We define $M \in \R^{(n+m+1)\times (n+m+1)}$ to be a positive-definite matrix. Due to the lift \eqref{eq:affine-linear}, this cost includes not only quadratic, but also linear terms.

In addition to a cost, we also consider control objectives that require certain constraints to be satisfied at all times. These constraints are defined as:
\begin{align}
\label{eq:constraint}
D\eta_i \leq 0,\quad \forall i \in \{ 0, 1, ..., N\},
\end{align}
where $\eta_i=\begin{bmatrix} x_i & u_i\end{bmatrix}^T$ and $D \in \R^{h \times (n+m+1)}$. Note that, despite appearing to be linear constraints, the constraints above are in fact affine, in view of the construction \eqref{eq:affine-linear}.

\subsection{Attack model and privacy objectives}

The cloud is treated as a curious but honest adversary: the cloud adheres to the computations prescribed by an agreed-upon protocol, but may seek to extract and leak confidential information by keeping record of all computations and communicated messages.

The interaction between the plant and the cloud is performed in two steps. During the first step, called the handshaking, the plant provides the cloud with a suitably modified version of the plant model, cost, and constraints. In exchange, the cloud agrees to compute the input minimizing the provided cost, subject to the constraints and plant dynamics. During the second step, called plant execution, the plant repeatedly sends a suitably modified version of its measurements to the cloud. The cloud computes a new input based on the received measurements and sends it to the plant, where it is suitably modified before being applied to the plant.

In the previous paragraph we purposely used the vague expression ``suitably modified''. Making this expression more concrete requires that we first define the knowledge available to the plant. We consider the following three scenarios.

\begin{problem}[Scenario 1]
\label{scen_1}
Assuming the cloud has no knowledge about the plant:
\begin{enumerate}
\item how to modify the plant $(A,B,C)$, cost $J$, and constraint matrix $D$ before sending them during the handshaking step,
\item how to modify the measurements sent to the plant, and 
\item how to modify the inputs received from the plant,
\end{enumerate}
so that the plant's trajectory minimizes cost $J$ in \eqref{eq:cost}, while preventing the cloud from learning the plant $(A,B,C)$, the cost $J$, the constraint matrix $D$, and the plant's trajectory $\{x_k,u_k,y_k\}_{k\in\N}$?
\end{problem}

\begin{problem}[Scenario 2]
\label{scen_2}
Assuming the cloud has no knowledge about the plant except for knowing what are its sensors and actuators:
\begin{enumerate}
\item how to modify the plant $(A,B,C)$, cost $J$, and constraint matrix $D$ before sending them during the handshaking step; 
\item how to modify the measurements sent to the plant, and 
\item how to modify the inputs received from the plant,
\end{enumerate}
so that the plant's trajectory minimizes cost $J$ in \eqref{eq:cost}, while preventing the cloud from learning the plant $(A,B,C)$, the cost $J$, the constraint matrix $D$, and the plant's trajectory $\{x_k,u_k,y_k\}_{k\in\N}$?
\end{problem}

\begin{problem}[Scenario 3]
\label{scen_3}
Assuming the cloud has complete knowledge about the plant dynamics, including its sensors and actuators:
\begin{enumerate}
\item how to modify cost $J$, and constraint matrix $D$ before sending them alongside the plant $(A,B,C)$ during the handshaking step; 
\item how to modify the measurements sent to the plant, and 
\item how to modify the inputs received from the plant,
\end{enumerate}
so that the plant's trajectory minimizes cost $J$ in \eqref{eq:cost}, while preventing the cloud from learning the cost $J$, the constraint matrix $D$, and the plant's trajectory $\{x_k,u_k,y_k\}_{k\in\N}$?
\end{problem}

These problems are solved in Section \ref{privacy_sol} by utilizing isomorphisms and symmetries of control systems we define next in Section \ref{iso_sym}.

\section{Isomorphisms and\\symmetries of control systems} 
\label{iso_sym}

In this section, we introduce the notions of isomorphism and symmetry of control systems along with several technical results used in Section \ref{privacy_sol} to provide a solution to the problems described in Section \ref{problem}.

Let us denote by $\mathcal{S}_{n,m,p}$ the set of all controllable and observable linear control systems with state, input and output dimensions $n$, $m$, and $p$, respectively.

\begin{definition}
\label{def:isomorphism}
An isomorphism of control systems in $\mathcal{S}_{n,m,p}$ is a quadruple $\psi=(P,F,G,S)$ consisting of a change of state coordinates $P:\R^n\to \R^n$, state feedback $F:\R^n\to \R^m$, a change of coordinates in the input space $G:\R^m\to \R^m$, and a change of coordinates in the output space $S:\R^p\to\R^p$. Transformations $P$ and $S$ are affine invertible maps, $F$ is an affine map and $G$ is a linear invertible map.
\end{definition}

Recall that, to simplify notation, we lift the affine maps to linear maps using the transformation \eqref{eq:affine-linear}.

Let us also denote the set of isomorphisms of $\mathcal{S}_{n,m,p}$ described in Definition \ref{def:isomorphism} as $\mathcal{G}_{n,m,p}$. The set $\mathcal{G}_{n,m,p}$ forms a group under function composition as the group operation\footnote{A composition of two isomorphisms is given by $\psi_{2}\circ \psi_{1}=(P_2P_1, G_2F_1+F_2P_1,G_2G_1,S_2S_1)$, the identity is $\psi_e=(I,0,I,I)$ and the inverse is given by $\psi^{-1}=(P^{-1},-G^{-1}FP,G^{-1},S^{-1}).$}. This allows us to define a group action of $\mathcal{G}_{n,m,p}$ on the set of linear control systems $\mathcal{S}_{n,m,p}$.

\begin{definition}
Each element $\psi \in \mathcal{G}_{n,m,p}$ acts on $\Sigma \in \mathcal{S}_{n,m,p}$ to produce $\psi_* \Sigma$ given by:
\begin{equation}
\begin{aligned}
\label{eq:psi}
\psi_*\Sigma&=(P,F,G,S)_*(A,B,C) \\&=(P(A-BG^{-1}F)P^{-1},PBG^{-1},SCP^{-1})\\& \triangleq (\tilde{A}, \tilde{B}, \tilde{C}) \triangleq \tilde{\Sigma}.
\end{aligned}
\end{equation}
The map $\psi_*$ is called an isomorphism action. We also say that systems $\Sigma$ and $\tilde{\Sigma}$ are equivalent.
\end{definition}

An isomorphism maps the state $x_k$, input $u_k$, and output $y_k$ of system $\Sigma$ to the state $\tilde{x}_k$, input $\tilde{u}_k$, and output $\tilde{y}_k$ of system $\tilde{\Sigma}$ as follows:
\begin{align}
\label{eq:tran_state} \tilde{x}_k & =Px_k\\
\label{eq:tran_input} \tilde{u}_k & =Fx_k+Gu_k\\
\label{eq:tran_output} \tilde{y}_k & =Sy_k.
\end{align}
Similarly, an isomorphism induces transformation on the control objectives --- i.e., the cost and constraints. The effect of $\psi$ on $\eta_k$ can be represented by:
\begin{align}
\tilde{\eta}_k=\begin{bmatrix} \tilde{x}_k\\  \tilde{u}_k\end{bmatrix}=\begin{bmatrix}P & 0 \\ F & G\end{bmatrix}\begin{bmatrix} x_k\\  u_k\end{bmatrix}\triangleq L \eta_k.
\end{align}
Therefore, the cost function $J$ can be expressed as a function of the sequence of modified states $\tilde{x} = \{ \tilde{x}_0, ..., \tilde{x}_N \}$ and the sequence of modified inputs $\tilde{u} = \{ u_0, ..., \tilde{u}_N \}$ as follows:
\begin{align}
\label{eq:fake_cost}
\tilde{J}(\tilde{x},\tilde{u})=& \psi_*J\left(x,u\right) =\sum_{i=0}^{N} \Delta \tilde{\eta}_i^T \tilde{M} \Delta \tilde{\eta}_i,
\end{align}
where $\tilde{M}=L^{-T} M L^{-1}$.
Applying the isomorphism action to the constraints in \eqref{eq:constraint} yields:
\begin{align}
\label{eq:fake_const}
\tilde{D} \tilde{\eta}_i \leq 0, \quad \forall i \in \{ 0, 1, ..., N\},
\end{align}
where $\tilde{D}=\psi_*D=DL^{-1}$.

The effect of an isomorphism on the system, trajectory, cost and constraints will be used in Section \ref{privacy_sol} to prevent the cloud from learning them.

For a given system $\Sigma$, there is a special subgroup of $\mathcal{G}_{n,m,p}$ called the symmetry group of $\Sigma$, which is defined by the following property.
\begin{definition}
Let $\Sigma \in \mathcal{S}_{n,m,p}$. An isomorphism $\psi \in \mathcal{G}_{n,m,p}$ is said to be a symmetry of $\Sigma$ if $\psi_* \Sigma = \Sigma$. The subgroup of symmetries of $\Sigma$ is denoted here as $\mathcal{K}_{n,m,p}(\Sigma)$.
\end{definition}

The notion of isomorphism was crafted to preserve properties of control systems. Among these, trajectories have a special significance.
A simple induction argument can be used to establish the following result.

\begin{lemma}
\label{th:trajectories}
Let $\Sigma \in \mathcal{S}_{n,m,p}$ and $\psi \in \mathcal{G}_{n,m,p}$. If $\tilde{\Sigma}=\psi_*{\Sigma}$ and $\{x_k,u_k,y_k\}_{k\in\N}$ is a trajectory of $\Sigma$, then $\{\tilde{x}_k, \tilde{u}_k, \tilde{y}_k\}_{k\in\N}$, as given by \eqref{eq:tran_state} - \eqref{eq:tran_output}, is a valid trajectory of $\tilde{\Sigma}$. 
\end{lemma}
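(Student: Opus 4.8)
The plan is to prove the claim by induction on the time index $k$, showing that the transformed triple $\{\tilde{x}_k,\tilde{u}_k,\tilde{y}_k\}$ satisfies the dynamics of $\tilde{\Sigma}=(\tilde{A},\tilde{B},\tilde{C})$ as given in \eqref{eq:psi}, where the transformations are those of \eqref{eq:tran_state}--\eqref{eq:tran_output}. The base case is immediate: $\tilde{x}_0 = Px_0$ is simply the definition of the transformed initial state, so there is nothing to check beyond it lying in $\R^n$ (here we recall that all maps are lifted to linear maps via \eqref{eq:affine-linear}, so $P$, $F$, $G$, $S$ act on the augmented vectors).

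For the inductive step, assume $\tilde{x}_k = Px_k$ and $\tilde{u}_k = Fx_k + Gu_k$. I would first compute $\tilde{A}\tilde{x}_k + \tilde{B}\tilde{u}_k$ by substituting the expressions for $\tilde{A} = P(A-BG^{-1}F)P^{-1}$ and $\tilde{B} = PBG^{-1}$ from \eqref{eq:psi}. Plugging in $\tilde{x}_k = Px_k$, the $P^{-1}$ cancels, giving $P(A-BG^{-1}F)x_k + PBG^{-1}(Fx_k+Gu_k) = PAx_k - PBG^{-1}Fx_k + PBG^{-1}Fx_k + PBu_k = P(Ax_k + Bu_k) = Px_{k+1} = \tilde{x}_{k+1}$. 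This is exactly the state update for $\tilde{\Sigma}$, and it confirms the transformed state trajectory is consistent. The output equation is even simpler: $\tilde{C}\tilde{x}_k = SCP^{-1}\cdot Px_k = SCx_k = Sy_k = \tilde{y}_k$, which matches \eqref{eq:tran_output}. Since $\tilde{u}_{k+1} = Fx_{k+1}+Gu_{k+1}$ by definition of the transformation \eqref{eq:tran_input}, the induction hypothesis is re-established at step $k+1$.

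There is essentially no hard part here; the only thing to be careful about is bookkeeping with the affine-to-linear lift, making sure that the cancellation $PBG^{-1}F x_k$ from the $\tilde{B}\tilde{u}_k$ term exactly annihilates the $-PBG^{-1}Fx_k$ term coming from $\tilde{A}\tilde{x}_k$ — this is the one algebraic identity that makes the feedback transformation $F$ "invisible" at the level of trajectories, and it is the whole point of how the isomorphism action \eqref{eq:psi} was defined. I would present the computation once for the state update and once for the output, note that the input relation is definitional, and conclude by induction that $\{\tilde{x}_k,\tilde{u}_k,\tilde{y}_k\}_{k\in\N}$ satisfies \eqref{eq:tru_sys2} with the triple $\tilde{\Sigma}$, hence is a valid trajectory of $\tilde{\Sigma}$.
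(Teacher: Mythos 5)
Your proposal is correct and matches the paper, which simply states that ``a simple induction argument can be used'' and omits the computation; you have supplied exactly that computation, and the key cancellation $-PBG^{-1}Fx_k + PBG^{-1}Fx_k$ is carried out correctly. (One could even note that induction is not strictly needed, since $\tilde{x}_k = Px_k$ is defined directly from the given trajectory for every $k$ and the verification at each step is self-contained, but this is a matter of presentation, not correctness.)
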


This means that if the cloud receives $\tilde{\Sigma}$ during the handshaking step, then the received sequence of measurements $\tilde{y}$ and the produced sequence of control inputs $\tilde{u}$ in the subsequent execution step are compatible with the plant $\tilde{\Sigma}$. To elaborate, both the modified measurements $\tilde{y}$ and modified control inputs $\tilde{u}$ would be compatible with modified dynamics $\tilde{\Sigma}$.

Let us now define $\bar{\mathcal{S}}_{n,m,p}$ to be a set of quadruples $\Omega \triangleq \left\{ \Sigma, J, D, \left\{x_k, y_k, u_k \right\} \right\}_{k \in \N}$ such that $\left\{x_k, y_k, u_k \right\}$ is a trajectory of a linear system $\Sigma \in \mathcal{S}_{n,m,p}$ minimizing cost function $J$ under constraints $D$. 

\begin{lemma}
\label{th:smooth_manifold}
The set $\bar{\mathcal{S}}_{n,m,p}$ is a smooth manifold.
\end{lemma}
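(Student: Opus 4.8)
The plan is to exhibit $\bar{\mathcal{S}}_{n,m,p}$ as a smooth manifold by identifying it with a subset (in fact, a graph over an open submanifold) of a parametrizing space built out of the system data and the trajectory data. First I would recall that the set $\mathcal{S}_{n,m,p}$ of controllable and observable triples $(A,B,C)$ is a well-known open subset of the Euclidean space $\R^{n\times n}\times\R^{n\times m}\times\R^{p\times n}$ (controllability and observability are full-rank conditions on the Kalman matrices, hence open), and is therefore a smooth manifold; likewise the set of admissible costs is parametrized by the open cone of positive-definite matrices $M\in\R^{(n+m+1)\times(n+m+1)}$ together with the reference sequences $x^*,u^*$, and the constraint data by $D\in\R^{h\times(n+m+1)}$ — all of these live in Euclidean spaces or open subsets thereof, hence are trivially smooth manifolds. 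The remaining coordinate is the trajectory $\{x_k,u_k,y_k\}_{k\in\N}$ subject to the optimality requirement.

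The key observation is that, once $\Sigma$, $J$, and $D$ are fixed, the optimizing trajectory is \emph{not} a free parameter: for $N$ finite the cost \eqref{eq:cost} is a strictly convex quadratic in the decision variables (because $M\succ0$, and by the lift \eqref{eq:affine-linear} the "$1$" row is pinned), minimized over the affine subspace cut out by the dynamics recursion $x_{k+1}=Ax_k+Bu_k$ together with the linear constraints $D\eta_i\le0$; so the minimizer, when it exists, is determined by the initial condition $x_0$ (equivalently $\bar x_0\in\R^n$) and the data $(\Sigma,J,D)$. Thus I would define the candidate chart domain to be the set of tuples $(\Sigma,J,D,x_0)$ for which the optimization problem is feasible and the (unique) minimizer has its constraints inactive — an open condition — and on this set the map sending $(\Sigma,J,D,x_0)$ to the full trajectory is smooth, since in the inactive-constraint regime the minimizer solves the linear KKT/Riccati system whose coefficients depend smoothly (indeed rationally, with nonvanishing denominator) on $(A,B,C,M,x^*,u^*)$. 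Hence $\bar{\mathcal{S}}_{n,m,p}$ is the image of a smooth injective immersion from an open subset of a Euclidean space, or equivalently the graph of a smooth map, and is therefore a smooth embedded submanifold; its dimension is $\dim\mathcal{S}_{n,m,p}+\dim\{\text{cost data}\}+\dim\{\text{constraint data}\}+n$.

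The main obstacle I anticipate is handling the constraints \eqref{eq:constraint} and the case $N=+\infty$ cleanly. When some constraint is active at the optimum the minimizer is only piecewise-smooth in the data (it is smooth on each region where the active set is locally constant, with possible nondifferentiability across active-set changes), so to get a genuine manifold structure one must either (i) restrict, as I would, to the open dense stratum where the constraint is inactive — arguing that the "suitably modified" problems of interest generically fall in this stratum — or (ii) treat $\bar{\mathcal{S}}_{n,m,p}$ as a stratified space and claim smoothness on the top stratum. For $N=+\infty$ one replaces the finite quadratic program by the infinite-horizon LQR/Riccati solution: the stabilizing solution of the discrete algebraic Riccati equation depends smoothly on $(A,B,M)$ on the (open) controllable-stabilizable locus by the implicit function theorem applied to the Riccati map, and the optimal trajectory is then generated by the resulting closed-loop linear recursion, again smoothly in the data and in $x_0$. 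I would therefore structure the proof as: (1) parametrize $(\Sigma,J,D)$ by an open subset $U$ of a Euclidean space; (2) show the optimal-trajectory map $U\times\R^n\supseteq V\to(\text{sequence space})$ is smooth on the open set $V$ where the minimizer exists, is unique, and has inactive constraints; (3) conclude $\bar{\mathcal{S}}_{n,m,p}$ (restricted to this generic locus) is the graph of this map, hence a smooth manifold; the delicate point being step (2), the smooth dependence of the constrained minimizer on the problem data.
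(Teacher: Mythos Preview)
Your approach is correct in spirit but takes a genuinely different route from the paper. The paper's proof simply \emph{declares} that $\bar{\mathcal{S}}_{n,m,p}$ is the Cartesian product $\mathcal{S}_{n,m,p}\times\mathcal{M}^{++}(m+n+1,\R)\times\mathcal{M}_d(h\times(m+n+1),\R)$, then shows each factor is a smooth manifold: $\mathcal{S}_{n,m,p}$ as the preimage of an open set under the continuous map $(A,B,C)\mapsto(\det\mathcal{C},\det\mathcal{O})$, the positive-definite matrices by citation, and the full-rank matrices by citation. The trajectory component is silently dropped --- the paper effectively treats it as redundant data determined by $(\Sigma,J,D)$, without ever arguing why, and without including an initial-condition coordinate.

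You, by contrast, take the trajectory seriously: you parametrize by $(\Sigma,J,D,x_0)$, argue that the optimizing trajectory is a smooth function of this data via KKT/Riccati theory, and realize $\bar{\mathcal{S}}_{n,m,p}$ as the graph of that map. This is more honest and explains \emph{why} the trajectory does not add independent degrees of freedom (beyond $x_0$). It also surfaces the real analytic obstacles --- non-smoothness across active-set changes and the $N=+\infty$ case --- which the paper's product argument simply does not see because it never engages with the optimization. The cost of your route is that you only get a manifold on the open stratum where constraints are inactive (or must invoke parametric-QP regularity results to patch across strata), whereas the paper's shortcut, by ignoring the trajectory, avoids this entirely at the price of being somewhat underspecified. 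Note also that your dimension count differs from the paper's by the extra $+n$ from $x_0$; the paper's later results only use that $\bar{\mathcal{S}}_{n,m,p}$ is \emph{some} smooth manifold on which $\mathcal{G}_{n,m,p}$ acts smoothly, so this discrepancy is harmless downstream.
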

\begin{proof}
We can see that $\bar{\mathcal{S}}_{n,m,p}$ is, in fact, the Cartesian product of $\mathcal{S}_{n,m,p}$ with the set of cost functions $\mathcal{M}^{++}(m+n+1, \R)$, defined by positive-definite matrices, with the set of constraints $\mathcal{M}_{d}(h \times (m+n+1), \R)$, defined by the set of full-rank matrices, where $d=\text{min}\{h, m+n+1\}$. It is known that the product space is a smooth manifold if its constituents are smooth manifolds \cite[p.~21]{manifolds}. It remains to show that these constinuents are indeed smooth manifolds.

Let us construct the map:
\begin{equation}
\begin{aligned}
f_S: &\R^{n \times (n+1)} \times \R^{n \times m} \times \R^{p \times (n+1)} \rightarrow \R^2 \\ &(A,B,C) \mapsto \left( \text{det }\mathcal{C}, \text{det } \mathcal{O} \right) ,
\end{aligned}
\end{equation}
where $\mathcal{C}$ and $\mathcal{O}$ are the controllability and observability matrices of the dynamics $(A,B,C)$. It can be seen that $\mathcal{S}_{n,m,p} = f_S^{-1}(\R^2 \setminus \left( 0, 0 \right) )$. The function $f_S$ is continuous since each of its elements is defined by a polynomial function of the elements of $(A,B,C)$. Given that for continuous functions the preimage of every open set is an open set, we have that $\mathcal{S}_{n,m,p}$ is an open subset of the domain of $f_S$. Seeing that the domain of $f_S$ is a smooth manifold, $\mathcal{S}_{n,m,p}$ is a smooth manifold of dimension $n(n+1) + nm + p(n+1)$.

The set of positive-definite matrices $\mathcal{M}^{++}(m+n+1,\R)$ is shown to be a smooth embedded submanifold of $\R^{(m+n+1) \times (m+n+1)}$ of dimension $(m+n+1)(m+n+2)/2$ in \cite{5278558}.

The set of full-rank matrices $\mathcal{M}_{d}(h \times (m+n+1), \R)$ is a smooth manifold of dimension $h(m+n+1)$ \cite[p. 19]{manifolds}.
\end{proof}

Similarly to $\mathcal{S}_{n,m,p}$, we can define a group action of $\mathcal{G}_{n,m,p}$ on $\bar{\mathcal{S}}_{n,m,p}$ in view of the previous discussion.

Therefore, we can use the isomorphism action of $\mathcal{G}_{n,m,p}$ to define an equivalence relation on $\bar{\mathcal{S}}_{n,m,p}$.
\begin{definition}
Let $\Omega=(\Sigma,J,D,\{x_k,u_k,y_k\}_{k\in\N})$ and $\tilde{\Omega}=(\tilde{\Sigma},\tilde{J},\tilde{D},\{\tilde{x}_k,\tilde{u}_k,\tilde{y}_k\}_{k\in\N})$ be elements of $\bar{\mathcal{S}}_{n,m,p}$.
The equivalence relation $\sim_{\mathcal{G}}$ on $\bar{\mathcal{S}}_{n,m,p}$ denoted by:
\begin{align}
&\Omega \sim_{\mathcal{G}} \tilde{\Omega},
\end{align}
is defined by the existence of $\psi \in \mathcal{G}_{n,m,p}$ such that:
\begin{align}
& \tilde{\Omega}=\psi_* \Omega;
\end{align}
i.e., $\tilde{\Sigma}=\psi_*\Sigma$, $\tilde{J}=\psi_*J$, $\tilde{D}=\psi_*D$, and \mbox{$\{\tilde{x}_k,\tilde{u}_k,\tilde{y}_k\}_{k\in\N}$} is given in terms of $\{x_k,u_k,y_k\}_{k\in\N}$ as in \eqref{eq:tran_state} - \eqref{eq:tran_output}.
\end{definition}

The equivalence relation $\sim_\mathcal{G}$, in turn, defines equivalence classes in $\bar{\mathcal{S}}_{n,m,p}$. The equivalence class of $\Omega \in \bar{\mathcal{S}}_{n,m,p}$ defined by the action of $\mathcal{G}_{n,m,p}$ is the set:
\begin{align}
[ \Omega ] &\triangleq \{ \Omega' \in  \bar{\mathcal{S}}_{n,m,p}| \exists \psi \in \mathcal{G}_{n,m,p} \text{ such that } \Omega'=\psi_* \Omega \} \nonumber \\
&= \{ \psi_* \Omega | \psi \in \mathcal{G}_{n,m,p} \}.
\end{align}
This equivalence class is also called the orbit of $\Omega$ under action of $\mathcal{G}_{n,m,p}$.

To facilitate further results, let us show that $\mathcal{G}_{n,m,p}$ is a Lie group acting on $\bar{\mathcal{S}}_{n,m,p}$.

\begin{lemma}
\label{th:lie_group}
The group $\mathcal{G}_{n,m,p}$ is a Lie group of dimension $n(n+1)+m(n+1)+m^2 + p(p+1)$ acting smoothly on $\bar{\mathcal{S}}_{n,m,p}$.
\end{lemma}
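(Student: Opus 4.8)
The plan is to verify the three defining properties of a Lie group action in turn: first that $\mathcal{G}_{n,m,p}$ is a smooth manifold of the claimed dimension, then that the group operations (composition and inversion) are smooth, and finally that the action map $\mathcal{G}_{n,m,p} \times \bar{\mathcal{S}}_{n,m,p} \to \bar{\mathcal{S}}_{n,m,p}$ is smooth. Since Lemma \ref{th:smooth_manifold} already establishes that $\bar{\mathcal{S}}_{n,m,p}$ is a smooth manifold, the target of the action presents no new difficulty.

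First I would identify $\mathcal{G}_{n,m,p}$ as a product manifold built from its four components. Recalling that all maps are lifted via \eqref{eq:affine-linear}: $P$ is an affine invertible map on $\R^n$, so it corresponds to a matrix in the affine group, which is an open subset of $\R^{n \times (n+1)}$ and hence a manifold of dimension $n(n+1)$; similarly $S$ contributes $p(p+1)$; the affine (not necessarily invertible) feedback $F: \R^n \to \R^m$ contributes $m(n+1)$; and the linear invertible $G$ is an element of $GL(m,\R)$, an open subset of $\R^{m \times m}$, contributing $m^2$. The invertibility constraints cut out open subsets (preimages of $\R \setminus \{0\}$ under the determinant, which is polynomial hence continuous), so each factor is a smooth manifold and the product is a smooth manifold of dimension $n(n+1) + m(n+1) + m^2 + p(p+1)$, matching the claim.

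Next I would check that $\mathcal{G}_{n,m,p}$ is a Lie group, i.e., that the composition $\psi_2 \circ \psi_1 = (P_2 P_1, G_2 F_1 + F_2 P_1, G_2 G_1, S_2 S_1)$ and the inverse $\psi^{-1} = (P^{-1}, -G^{-1}FP, G^{-1}, S^{-1})$ are smooth as maps between the relevant Euclidean coordinate patches. Composition is polynomial in the entries, hence smooth; inversion involves matrix inverses of $P$, $G$, $S$, which are smooth functions on the open set where those matrices are invertible (by Cramer's rule, the entries of the inverse are rational functions with nonvanishing denominator). This is routine and I would not belabor it.

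The substantive step is smoothness of the action on $\bar{\mathcal{S}}_{n,m,p}$. Using the explicit formulas \eqref{eq:psi}, \eqref{eq:fake_cost}, and \eqref{eq:fake_const}, the image $\psi_* \Omega = (P(A - BG^{-1}F)P^{-1}, PBG^{-1}, SCP^{-1}, \tilde{M}, \tilde{D}, \{\tilde{x}_k, \tilde{u}_k, \tilde{y}_k\})$ is expressed through matrix products and the inverses $P^{-1}$, $G^{-1}$, $S^{-1}$ (recall $L = \begin{bmatrix} P & 0 \\ F & G \end{bmatrix}$ is invertible since $P$ and $G$ are), all of which are smooth on the domain in question. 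The only genuine concern is that the action must land back in $\bar{\mathcal{S}}_{n,m,p}$: one must confirm that $\psi_* \Sigma$ is again controllable and observable (isomorphisms preserve these), that $\tilde{M} = L^{-T} M L^{-1}$ remains positive-definite (congruence preserves positive-definiteness, and $L$ is invertible), that $\tilde{D} = D L^{-1}$ retains full rank ($L^{-1}$ is invertible), and that $\{\tilde{x}_k, \tilde{u}_k, \tilde{y}_k\}$ is an optimal trajectory of $\tilde{\Sigma}$ for $\tilde{J}$ under $\tilde{D}$ — the latter following because the change of variables $\eta_k \mapsto L\eta_k$ is a bijection carrying feasible trajectories to feasible trajectories and preserving cost values (by Lemma \ref{th:trajectories} and the definitions of $\tilde{J}$, $\tilde{D}$). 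Once well-definedness into $\bar{\mathcal{S}}_{n,m,p}$ is secured, smoothness is immediate from the formulas. I expect this verification that the action is well-defined — rather than any analytic subtlety — to be the main point requiring care.
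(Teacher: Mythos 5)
Your proof is correct and follows essentially the same route as the paper: realize $\mathcal{G}_{n,m,p}$ as a product of the four component manifolds (affine groups, affine maps, $GL(m,\R)$), sum the factor dimensions to get $n(n+1)+m(n+1)+m^2+p(p+1)$, and observe that composition, inversion, and the action are built from matrix products and inverses, hence smooth by polynomiality and Cramer's rule. Your explicit check that $\psi_*\Omega$ lands back in $\bar{\mathcal{S}}_{n,m,p}$ (preservation of controllability/observability, positive-definiteness under congruence, rank, and optimality of the trajectory) is a detail the paper leaves implicit in the discussion preceding the lemma, but it does not change the argument.
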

\begin{proof}
It was previously established that $\mathcal{G}_{n,m,p}$ is a group. It is a Lie group because it is a Cartesian product of smooth manifolds (i.e., general linear groups and vector spaces of various dimensions) and its multiplication and inversion maps are smooth. Moreover, since the dimension of a product of smooth manifolds is equal to the sum of the factors' dimensions, the dimension of $\mathcal{G}_{n,m,p}$ is $n(n+1)+m(n+1)+m^2 + p(p+1)$ \cite[p. 21]{manifolds}. The group $\mathcal{G}_{n,m,p}$ acts smoothly on $\bar{\mathcal{S}}_{n,m,p}$ since its action involves matrix multiplication and matrix inversion: the former results in every element of the product being a polynomial function of the elements of the factors, while the latter is smooth by Cramer's rule \cite{manifolds}.
\end{proof} 

The next result shows that when the cloud optimizes $\tilde{J}$ and the plant replaces each $y_k$ with output $\tilde{y}_k$, the resulting sequence of inputs ${\tilde{u}}$ can be used to reconstruct a sequence of inputs ${u}$ that optimizes $J$. Its proof amounts to using the change of variables \eqref{eq:tran_state}-\eqref{eq:tran_output}.

\begin{lemma}
\label{th:cost}
Let $\Omega \in \bar{\mathcal{S}}_{n,m,p}$ and $\psi \in \mathcal{G}_{n,m,p}$. Suppose the cloud solves the optimization problem:
\begin{equation*}
\begin{aligned}
& \underset{\tilde{u}}{\text{min}} & \tilde{J}(\tilde{x},\tilde{u})\\
& \text{subject to}
& & \hat{D}\hat{\eta_i} \leq 0, \quad \forall i \in \{0, ..., N \}, 
\end{aligned}
\end{equation*}
for the plant $\tilde{\Sigma}=\psi_*\Sigma$ and the sequence $\tilde{u}^{*}$ is a unique solution of this optimization problem. Then, the unique solution of the optimization problem:
\begin{equation*}
\begin{aligned}
& \underset{u}{\text{min}} & J(x,u)\\
& \text{subject to}
& & D\eta_i \leq 0, \quad \forall i \in \{0, ..., N \}
\end{aligned}
\end{equation*}
for the plant $\Sigma$ is the sequence $u^{*}$ such that \mbox{$u^{*}_i=G^{-1}(\tilde{u}_i^{*}-Fx_i)$} for all $i \in \{0, ..., N \}$.
\end{lemma}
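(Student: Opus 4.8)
The plan is to exploit the bijective correspondence between trajectories of $\Sigma$ and trajectories of $\tilde\Sigma$ established by Lemma~\ref{th:trajectories}, together with the explicit change of variables \eqref{eq:tran_state}--\eqref{eq:tran_output}, and to verify that this correspondence maps feasible points to feasible points and cost values to equal cost values. First I would fix the measured output sequence $y = \{y_k\}$ of $\Sigma$ (equivalently, the initial state and disturbance-free data), so that $\tilde y_k = S y_k$ is the output sequence seen by the cloud; by Lemma~\ref{th:trajectories} the feasible decision variables for the cloud's problem are exactly the sequences $\tilde u$ of the form $\tilde u_i = F x_i + G u_i$ where $u$ ranges over input sequences feasible for $\Sigma$ with that output data, and the corresponding states satisfy $\tilde x_i = P x_i$. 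This gives an affine bijection $u \mapsto \tilde u$ between the two feasible sets before the inequality constraints are imposed; its inverse is precisely $u_i = G^{-1}(\tilde u_i - F x_i)$, which is the formula in the statement.

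Next I would check that this bijection respects the constraint sets. Writing $\tilde\eta_i = L\eta_i$ with $L = \begin{bmatrix} P & 0 \\ F & G \end{bmatrix}$ as in the excerpt, and $\tilde D = D L^{-1}$, we get $\tilde D \tilde\eta_i = D L^{-1} L \eta_i = D\eta_i$, so $\tilde D\tilde\eta_i \le 0$ holds if and only if $D\eta_i \le 0$ holds; hence the bijection carries the feasible set of the plant's problem onto the feasible set of the cloud's problem. (Here I am reading the $\hat D, \hat\eta_i$ appearing in the cloud's problem as $\tilde D, \tilde\eta_i$.) Then I would check equality of objective values: since $\Delta\tilde\eta_i = L\,\Delta\eta_i$ and $\tilde M = L^{-T} M L^{-1}$, each summand satisfies $\Delta\tilde\eta_i^T \tilde M \Delta\tilde\eta_i = \Delta\eta_i^T L^T L^{-T} M L^{-1} L \Delta\eta_i = \Delta\eta_i^T M \Delta\eta_i$, so $\tilde J(\tilde x,\tilde u) = J(x,u)$ along corresponding trajectories — where the reference trajectory for the transformed problem is taken to be the image under the same change of variables of the original reference trajectory.

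Having established that we have a bijection between the two feasible sets under which the objective values coincide, the conclusion is immediate: $\tilde u^*$ minimizes $\tilde J$ over the cloud's feasible set if and only if its preimage $u^*$, given by $u^*_i = G^{-1}(\tilde u^*_i - F x_i)$, minimizes $J$ over the plant's feasible set, and uniqueness transfers across the bijection as well. I would present this as a short chain: feasibility equivalence, objective equality, then optimality-and-uniqueness transfer.

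I do not expect a genuine obstacle here — the result is essentially a change-of-variables argument and the substantive content (that transformed trajectories are trajectories) has already been isolated in Lemma~\ref{th:trajectories}. The one point requiring a little care is bookkeeping: ensuring that the states $x_i$ entering the formula $u^*_i = G^{-1}(\tilde u^*_i - F x_i)$ are exactly the states of the optimal trajectory of $\Sigma$ driven by $u^*$ with the given initial condition (so that $\tilde x_i = P x_i$ is consistent), and clarifying the (harmless) notational mismatch between $\hat D,\hat\eta$ in the statement and $\tilde D,\tilde\eta$ in the surrounding text. Both are matters of phrasing rather than mathematical difficulty.
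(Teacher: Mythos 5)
Your proposal is correct and follows exactly the route the paper indicates: the paper gives no written proof beyond remarking that it ``amounts to using the change of variables \eqref{eq:tran_state}--\eqref{eq:tran_output},'' and your argument is a careful fleshing-out of that remark (feasibility preserved since $\tilde{D}\tilde{\eta}_i = DL^{-1}L\eta_i = D\eta_i$, cost preserved since $\tilde{M}=L^{-T}ML^{-1}$, then transfer of optimality and uniqueness across the bijection). Your reading of $\hat{D},\hat{\eta}_i$ as typos for $\tilde{D},\tilde{\eta}_i$ is also the intended one.
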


\section{Solving the control-over-the-cloud\\ privacy problem} 

\label{privacy_sol}

\subsection{Enforcing privacy}

The main reason for using isomorphisms is to preclude the cloud from distinguishing between isomorphic systems. We now formalize the notion of indistinguishability.

\begin{definition}
A protocol renders two quadruples $\Omega$ and $\tilde{\Omega}$ indistinguishable by the cloud if the exchanged messages, when using the protocol between the cloud and the plant $\Omega$, and the exchanged messages, when using the protocol between the cloud and the plant $\tilde{\Omega}$, can be made the same.
\end{definition}

The results from Section \ref{iso_sym} allow us to construct a communication protocol between the plant and the cloud that, as will be further shown, solves Problems \ref{scen_1}-\ref{scen_3}. We start by detailing this protocol.
\begin{algorithm}[H]
 \caption{Secure communication}
 \label{com_protocol}
\begin{algorithmic}[1]

 \renewcommand{\algorithmicrequire}{\textbf{Input:}}
 \renewcommand{\algorithmicensure}{\textbf{Output:}}
 
 \REQUIRE Plant: $\psi$, $\Sigma$, $J$, $D$, $\tilde{u}_k$;\\ \hspace{4mm} Cloud: $\tilde{y}_k$, $\tilde{\Sigma}$, $\tilde{J}$, $\tilde{D}$
 \ENSURE Plant: $\tilde{\Sigma}$, $\tilde{J}$, $\tilde{D}$, $\tilde{y}_k$;\\ \hspace{7mm} Cloud: $\tilde{u}_k$
 
\textbf{\textit{Phase 1: Handshaking}}:
 \STATE Plant: Encode $\Sigma$, $J$, $D$ into $\tilde{\Sigma}=\psi_*\Sigma$, $\tilde{J}=\psi_*J$ and $\tilde{D}=\psi_*D$;
 \STATE Plant: Send $\tilde{\Sigma}$, $\tilde{J}$, and $\tilde{D}$ to the cloud;
 \\ \textbf{\textit{Phase 2: Execution}}:
 \STATE Plant: Encode measurement $y_k$ into $\tilde{y}_k=Sy_k$ and send $\tilde{y}_k$ to the cloud;
\STATE Cloud: Use the received $\tilde{y}_k$ to estimate $\tilde{x}_k$ and compute $\tilde{u}_k$ minimizing $\tilde{J}$ subject to the constraints $\tilde{D}$ and the dynamics $\tilde{\Sigma}$;
\STATE Cloud: Send $\tilde{u}_k$ to the plant;
\STATE Plant: Use the isomorphism $\psi$ to decode $\tilde{u}_k$ and produce $u_k$ using \eqref{eq:tran_input};
\STATE Plant: Apply $u_k$ to the actuators.

\end{algorithmic}
\end{algorithm}

From Lemma \ref{th:cost}, we see that Algorithm \ref{com_protocol} provides the plant with the inputs $u_k$ that satisfy the original control objective --- i.e., the plant's trajectory minimizes cost $J$ under affine constraints $D$. 

Let us note how all the required computations in this algorithm are matrix multiplications, which means that both handshaking and execution can be performed in $O(k^3)$ time, where $k=\max\{n,m,p\}$. However, performing matrix multiplications of constant matrices (e.g., $G^{-1}F$) in advance would reduce the complexity of the execution to $O(k^2)$. Both of these complexities were calculated only for the client side (i.e., Plant) of the algorithm.

Let us now show that applying this protocol indeed makes any two systems in the same equivalence class indistinguishable from each other.

\begin{theorem}
\label{th:confusion}
Algorithm \ref{com_protocol} renders isomorphic systems $\Omega =(\Sigma,J,D,\left\{x_k,u_k,y_k\right\}_{k\in \N})$ and $\tilde{\Omega}=(\tilde{\Sigma}, \tilde{J},\tilde{D}, \left\{\tilde{x}_k,\tilde{u}_k,\tilde{y}_k\right\}_{k\in \N})$ indistinguishable by the cloud.
\end{theorem}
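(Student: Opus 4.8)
The plan is to exhibit an explicit pair of isomorphisms, one used by each plant, under which the two executions of Algorithm \ref{com_protocol} put exactly the same data on the channel, so that the cloud's transcript is literally identical in the two cases. Since $\Omega$ and $\tilde{\Omega}$ are isomorphic, there is $\psi=(P,F,G,S) \in \mathcal{G}_{n,m,p}$ with $\tilde{\Omega}=\psi_*\Omega$. I would have the plant holding $\Omega$ run the protocol with the isomorphism $\psi$, and the plant holding $\tilde{\Omega}$ run it with the identity isomorphism $\psi_e=(I,0,I,I)$ (more generally, $\Omega$ may use any $\psi_1$ provided $\tilde{\Omega}$ uses $\psi_1\circ\psi^{-1}$; the associativity of composition that makes $\mathcal{G}_{n,m,p}$ a group, invoked in Lemma \ref{th:lie_group}, then gives $(\psi_1\circ\psi^{-1})_*\tilde{\Omega}=(\psi_1)_*\Omega$). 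The claim reduces to checking that the two message transcripts so produced coincide.

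First I would list the messages Algorithm \ref{com_protocol} actually transmits: in Phase 1 the plant sends the triple $(\psi_*\Sigma,\psi_*J,\psi_*D)$, and in Phase 2, at each time $k$, the plant sends $Sy_k$ while the cloud replies with an input; everything else --- the plant's observer, the decoding step \eqref{eq:tran_input}, the choice of isomorphism --- is local computation and never leaves the plant. For the handshaking messages, the $\Omega$-plant sends $(\psi_*\Sigma,\psi_*J,\psi_*D)=(\tilde{\Sigma},\tilde{J},\tilde{D})$ by \eqref{eq:psi} and the definition of the action on $\bar{\mathcal{S}}_{n,m,p}$, while the $\tilde{\Omega}$-plant sends $((\psi_e)_*\tilde{\Sigma},(\psi_e)_*\tilde{J},(\psi_e)_*\tilde{D})=(\tilde{\Sigma},\tilde{J},\tilde{D})$; hence the Phase 1 messages agree componentwise.

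Next I would treat Phase 2 by induction on $k$, the key observation being that, as the cloud is honest and follows the agreed protocol, the input it transmits at step $k$ is a fixed function of the handshaking data it received together with the measurements $\tilde{y}_0,\dots,\tilde{y}_k$ it has seen (if the cloud is randomized, couple the two runs to use the same internal randomness); thus, once the inputs to this function agree, so do the cloud's replies. For the base case, the $\Omega$-plant transmits $Sy_0$, whereas the $\tilde{\Omega}$-plant transmits the time-$0$ output of a trajectory of $\tilde{\Sigma}$ obtained from $\{x_k,u_k,y_k\}$ via \eqref{eq:tran_state}--\eqref{eq:tran_output}; by Lemma \ref{th:trajectories} (applied to the trajectory relating $\Omega$ and $\tilde{\Omega}$) this equals $Sy_0$, so the two transmitted measurements coincide, the cloud returns the same input to both, and via \eqref{eq:tran_input} each plant propagates its (internally different but externally invisible) state consistently. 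The inductive step is identical, again using Lemma \ref{th:trajectories} to match the next transmitted measurement. Therefore every Phase 2 message agrees as well, and the full transcripts coincide, which is exactly indistinguishability in the sense of the definition.

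The argument is essentially bookkeeping, and I expect the only delicate point to be the Phase 2 induction, where one must be careful that (i) what the cloud transmits depends only on the transcript it has seen and not on which underlying plant is ``really'' present, and (ii) the measurements emitted by the two plants agree at every step --- this second point is precisely the content of Lemma \ref{th:trajectories}. Once these two facts are pinned down, the conclusion is immediate, and no separate appeal to Lemma \ref{th:cost} (optimality of the recovered input) is needed here, since indistinguishability is purely a statement about the exchanged messages.
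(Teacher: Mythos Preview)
Your proposal is correct and follows essentially the same approach as the paper: run $\text{Alg}(\Omega,\psi)$ against $\text{Alg}(\tilde{\Omega},\psi_e)$ and verify that the handshaking and execution messages coincide. Your version is slightly more careful in that you make the Phase~2 argument an explicit induction, invoke Lemma~\ref{th:trajectories} by name, and note the generalization to arbitrary $\psi_1$ paired with $\psi_1\circ\psi^{-1}$, but these are refinements of the same proof rather than a different route.
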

\begin{proof}
Since $\Omega$ and $\tilde{\Omega}$ are isomorphic, there exists an isomorphism $\psi$ such that $\psi_* \Sigma = \tilde{\Sigma}$, $\psi_* J = \tilde{J}$, and $\psi_*D = \tilde{D}$. Indistinguishibility of $\Omega$ and $\tilde{\Omega}$ will be shown by running two instances of Algorithm \ref{com_protocol}: one with $\Omega$ and $\psi$ as inputs, the other - with $\tilde{\Omega}$ and the identity isomorphism $\psi_e$. Let us denote the communication algorithm described in Algorithm \ref{com_protocol} applied to $\Omega \in \bar{\mathcal{S}}_{n,m,p}$ with the selected isomorphism $\psi \in \mathcal{G}_{n,m,p}$ by $\text{Alg}(\Omega, \psi)$.
During handshaking:
\begin{itemize}
\item when $\text{Alg}(\Omega, \psi)$ is executed, the plant sends $\psi_* \Sigma$, $\psi_*J$, and $\psi_* D$;
\item when $\text{Alg}(\tilde{\Omega}, \psi_e)$ is executed ($\psi_e$ is the identity of $\mathcal{G}_{n,m,p}$), the plant sends $\tilde{\Sigma}$, $\tilde{J}$, and matrix $\tilde{D}$ unprotected.
\end{itemize} 
Thus, the communicated dynamics and optimization problems are the same.
During execution:
\begin{itemize}
\item when $\text{Alg}(\Omega, \psi)$ is executed, $\psi$ takes trajectories $\left\{ x_k, u_k, y_k\right\}_{k \in \N}$ of $\Sigma$ to trajectories $\left\{ \tilde{x}_k, \tilde{u}_k, \tilde{y}_k \right\}_{k \in \N}$ of $\psi_* \Sigma$;
\item when $\text{Alg}(\tilde{\Omega}, \psi_e)$ is executed, the trajectories are $\left\{ \tilde{x}_k, \tilde{u}_k, \tilde{y}_k \right\}_{k \in \N}$.
\end{itemize}
Therefore, the cloud receives the same measurements from both plants. In response, since both plants communicated the same optimization problem, the cloud sends the same control inputs to both plant $\Omega$ and $\tilde{\Omega}$.
\end{proof}
The result described in Theorem \ref{th:confusion} states that the cloud cannot differentiate between any two plants, costs, constraints or trajectories contained in the same equivalence class of the $\sim_\mathcal{G}$-equivalence relation, thereby protecting the privacy of the system. In the next section, we quantify the amount of privacy provided by Algorithm \ref{com_protocol}.

\subsection{Quantifying privacy}

Privacy is created by preventing the cloud from knowing which quadruple $\Omega$ in its equivalence class $[\Omega]$ it is interacting with. Clearly, the larger the equivalence class, the more privacy is ensured. Since each equivalence class has infinitely many elements, cardinality cannot be used as a measure of privacy. In this section, we show that each equivalence class is a smooth manifold and we 
quantify privacy using the dimension of this manifold.

\subsubsection{Preliminaries: stabilizer subgroups and their dimensions}
The stabilizer subgroup of $\mathcal{G}_{n,m,p}$ for any $\Omega \in \bar{S}_{n,m,p}$, denoted by $\mathcal{K}_{n,m,p}(\Omega)$, is defined by:
\begin{equation}
\mathcal{K}_{n,m,p}(\Omega) = \{ \psi \in \mathcal{G}_{n,m,p} | \psi_{*} \Omega = \Omega \}.
\end{equation}

The subgroup $\mathcal{K}_{n,m,p}(\Omega)$ must be a subset of the symmetry subgroup $\mathcal{K}_{n,m,p}(\Sigma)$ since it must preserve the dynamics.

In \cite{respondek_sym}, Respondek gives a characterization of the symmetries of controllable pairs $(A,B)$.  Since when considering pairs $(A,B)$ the output is not relevant, the isomorphisms of $(A,B)$ degenerate into the form $\phi = (P,F,G)$, where the matrices $P$, $F$ and $G$ are defined to be the same as their counterparts in Definition \ref{def:isomorphism}. We denote the group of these isomorphisms by $\mathcal{G}_{n,m}$. The group action of $\mathcal{G}_{n,m}$ is given by:
\begin{align}
\label{eq:sub_isomorph}
\phi_* (A,B) = (P(A-BG^{-1}F)P^{-1},PBG^{-1}).
\end{align}
Let us define the symmetry subgroup of controllable systems $(A,B)$ as:
\begin{align}
\label{eq:symmetry_subgroup}
\mathcal{K}_{n,m}(A,B) = \{ \phi \in \mathcal{G}_{n,m} | \phi_*(A,B) = (A,B) \}.
\end{align}
The next proposition summarizes the results of \cite{respondek_sym} that are relevant to this paper and complements them with the results from \cite{dimension}:
\begin{proposition}
\label{th:dimension}
Let (A,B) be a controllable pair. Then: 
\begin{align*}
\text{dim }\mathcal{K}_{n,m}(A,B) &= mn - \sum_{i=1}^{r_1} \sum_{j=1}^{\kappa_i-1} r_j + m \\&= m(n+1) - \sum_{i=2}^{m} r_{i-1} r_i,
\end{align*}
where:
\begin{equation}
\nonumber
\begin{aligned}
r_1 &= \text{rank }B, \\
r_i &= \text{rank }S_{i-1}(A,B) - \text{rank }S_{i-2} (A,B),\enspace i = 2, ..., m, \\
S_j&(A,B) = \begin{bmatrix} B & AB & ... & A^{j} B \end{bmatrix}, \enspace j = 1, ..., m -1.
\end{aligned}
\end{equation}
and $\{ \kappa_i \}_{i=1}^m$ are controllability indices of $(A,B)$.
\end{proposition}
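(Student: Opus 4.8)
The plan is to assemble the statement from two ingredients: Respondek's explicit description of the symmetry group of a controllable pair \cite{respondek_sym}, which gives the first equality, and a combinatorial identity between the controllability indices and the rank sequence \cite{dimension}, which converts it into the second. I would begin with a reduction to canonical form. Stabilizers of points in the same orbit are conjugate: if $\phi\in\mathcal{G}_{n,m}$ then $\mathcal{K}_{n,m}(\phi_*(A,B))=\phi\,\mathcal{K}_{n,m}(A,B)\,\phi^{-1}$, and conjugation by a fixed element of a Lie group is a diffeomorphism, so $\dim\mathcal{K}_{n,m}(A,B)$ is a feedback invariant and depends only on the controllability indices $\kappa_1\geq\cdots\geq\kappa_m$. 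Hence it suffices to compute it on the Brunovsky canonical form of $(A,B)$, i.e.\ the block-diagonal chains of integrators of lengths $\kappa_i$. On that form a direct count gives $\text{rank }S_j(A,B)=\sum_{\ell}\min(j+1,\kappa_\ell)$, so that $r_i=\text{rank }S_{i-1}(A,B)-\text{rank }S_{i-2}(A,B)$ equals $\#\{\ell:\kappa_\ell\geq i\}$; that is, $\{r_i\}$ is the conjugate partition of $\{\kappa_i\}$, and in particular $\sum_i\kappa_i=\sum_i r_i=n$.

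\emph{First equality.} Writing the fixed-point equations $\phi_*(A,B)=(A,B)$ in the $(P,F,G)$ blocks over the Brunovsky form turns $\mathcal{K}_{n,m}$ into the solution set of an explicit linear system; \cite{respondek_sym} identifies this solution set as a space of block-triangular matrices and reads off its dimension as $mn-\sum_{i=1}^{r_1}\sum_{j=1}^{\kappa_i-1}r_j$. Because the isomorphisms used in this paper are \emph{affine} rather than linear — recall the lift \eqref{eq:affine-linear}, which makes $P$ and $F$ affine — one must then add the translational degrees of freedom that remain symmetries of the lifted system; these contribute the additive term $+m$, yielding the first displayed expression.

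\emph{Second equality.} It remains to prove $\sum_{i=1}^{r_1}\sum_{j=1}^{\kappa_i-1}r_j=\sum_{i\geq 2}r_{i-1}r_i$. This is bookkeeping on conjugate partitions: exchanging the order of summation and using the duality $\kappa_i\geq j+1\iff i\leq r_{j+1}$, the inner sum for each fixed $j$ collapses (it is taken over exactly $r_{j+1}$ values of $i$), so the left-hand side equals $\sum_{j\geq 1}r_j r_{j+1}$, which is the right-hand side after reindexing and discarding the vanishing tail. This identity is recorded in \cite{dimension}; substituting it and using $m(n+1)=mn+m$ gives $m(n+1)-\sum_{i\geq 2}r_{i-1}r_i$.

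\emph{Main obstacle.} The difficulty is not the algebra but the interface. One must check that Respondek's description — stated for the linear feedback group — transfers verbatim to the affine/lifted group $\mathcal{G}_{n,m}$ used here, up to the explicit $+m$ correction, and one must be careful about the summation ranges (e.g.\ the appearance of $r_j$ for $j$ exceeding $m$, or of $\kappa_i$ beyond $\text{rank }B$) so that the two displayed formulas agree literally and not merely up to zero terms. Once the conjugate-partition duality is set up, the remaining combinatorial identity is elementary.
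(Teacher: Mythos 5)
The paper offers no proof of this proposition --- it is presented as a summary of results quoted from \cite{respondek_sym} and \cite{dimension} --- so your overall route (reduce to Brunovsky form, invoke Respondek for the fixed-point count, add $m$ for the affine lift, and reconcile the two expressions by a conjugate-partition identity) is essentially the same citation-plus-bookkeeping argument, with the bookkeeping made explicit. The reduction via conjugacy of stabilizers, the identification of $r_j=\#\{\ell:\kappa_\ell\geq j\}$ as the conjugate partition, and the computation $\sum_{i=1}^{m}\sum_{j=1}^{\kappa_i-1}r_j=\sum_{j\geq 1}r_jr_{j+1}$ are all correct.

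The gap is in the last step, exactly where your ``main obstacle'' paragraph said to be careful: you cannot ``discard the vanishing tail,'' because the tail $\sum_{j\geq m}r_jr_{j+1}$ does \emph{not} vanish whenever $\kappa_1>m$. Take $m=1$, $n\geq 2$, so $\kappa_1=n$ and $r_j=1$ for $1\leq j\leq n$. Your (correct) evaluation of the first expression gives $mn-\sum_{j\geq 1}r_jr_{j+1}+m=n-(n-1)+1=2$, whereas the second expression as written gives $m(n+1)-\sum_{i=2}^{1}(\cdots)=n+1$. A direct computation of the stabilizer of the length-$n$ Brunovsky chain confirms that $2$ is the right answer: $PB=BG$ forces the last column of $P$ to be $ge_n$, which together with the Toeplitz structure imposed by $PA-AP\in\mathrm{Im}\,B$ collapses the linear part to $\{(gI,0,g)\}$ (dimension $1$), and the affine translations contribute one more parameter via $(I-\bar A)p=\bar Bf$. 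So the two displayed formulas genuinely disagree once $\kappa_1>m$; the first (with $r_j$ extended to all $j\geq 1$) is the correct one, and the second is only valid if its summation range is read as running over all $i$ with $r_i\neq 0$ (equivalently up to $i=\kappa_1$) rather than stopping at $m$. Your proof should therefore either add the hypothesis $\kappa_1\leq m$ for the second equality, or state the second formula with the extended range --- in which case your identity $\sum_{i=1}^{m}\sum_{j=1}^{\kappa_i-1}r_j=\sum_{j\geq1}r_jr_{j+1}$ already completes the argument with no tail to discard. As written, the claimed equality of the two displayed expressions is false, and this defect is inherited by the statement itself.
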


This result can be used to estimate the dimension of $\mathcal{K}_{n,m,p}(\Sigma)$. If $\Sigma = (A,B,C)$, then, from Proposition \ref{th:dimension}, we know the dimension of $\mathcal{K}_{n,m}(A,B)$ and that any $\phi \in \mathcal{K}_{n,m}(A,B)$ satisfies $\phi_* (A,B) = (A,B)$. Given $\phi = (P,F,G) \in \mathcal{K}_{n,m}(A,B)$, finding a corresponding $\psi = (P,F,G,S) \in \mathcal{K}_{n,m,p}(\Sigma)$ requires finding $S$ such that $C = SCP^{-1}$. Since we assume $C$ has linearly independent rows, for a given $P$, this equation has at most one solution. A solution exists if and only if $\text{Im }C^T \subset \text{Im }P^{-T}C^{T}$ \cite{LaubMAS}. Let $\mathcal{Q}(A,B,C)$ be the subset of $\mathcal{K}_{n,m}(A,B)$ defined by the elements $(P,F,G)$ for which a unique solution to $C = SCP^{-1}$ exists. It can be seen that there is a one-to-one correspondence between $\mathcal{Q}(A,B,C)$ and $\mathcal{K}_{n,m,p}(\Sigma)$. Since $\mathcal{Q}(A,B,C) \subset \mathcal{K}_{n,m}(A,B)$, this gives an upper bound on the dimension of the symmetry subgroup: 
\begin{align}
\text{dim }\mathcal{K}_{n,m,p}(\Sigma) \leq \text{dim }\mathcal{K}_{n,m,p}(A,B).
\end{align} 

\begin{lemma}
\label{th:dimension_bound}
For any \mbox{$\Omega = (\Sigma,J,D,\left\{x_k,u_k,y_k\right\}_{k\in \N}) \in \bar{\mathcal{S}}_{n,m,p}$},
\begin{align*}
\text{dim }\mathcal{K}_{n,m,p}(\Omega) \leq \text{dim }\mathcal{K}_{n,m,p}(\Sigma) \leq \text{dim }\mathcal{K}_{n,m,p}(A,B),
\end{align*}
where $\text{dim }\mathcal{K}_{n,m,p}(A,B)$ is given by Proposition \ref{th:dimension}.
\end{lemma}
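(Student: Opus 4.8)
The statement to prove is the chain of inequalities
\[
\text{dim }\mathcal{K}_{n,m,p}(\Omega) \leq \text{dim }\mathcal{K}_{n,m,p}(\Sigma) \leq \text{dim }\mathcal{K}_{n,m,p}(A,B).
\]

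The plan is to establish each inequality by exhibiting an injective group homomorphism (or at least an injection of sets that respects the manifold structure) from the smaller stabilizer into the larger one, and then invoke the fact that such stabilizers are Lie subgroups so that the dimension is monotone under inclusion. Before that, I would make precise the sense in which these are manifolds: each $\mathcal{K}_{n,m,p}(\Omega)$ is the stabilizer of a point under the smooth action of the Lie group $\mathcal{G}_{n,m,p}$ established in Lemma~\ref{th:lie_group}, hence a closed subgroup of $\mathcal{G}_{n,m,p}$, hence an embedded Lie subgroup by the closed subgroup theorem; similarly $\mathcal{K}_{n,m}(A,B)$ is a closed subgroup of $\mathcal{G}_{n,m}$. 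So "dimension" is well defined for all three objects.

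For the first inequality, $\text{dim }\mathcal{K}_{n,m,p}(\Omega) \leq \text{dim }\mathcal{K}_{n,m,p}(\Sigma)$: by definition any $\psi$ stabilizing the whole quadruple $\Omega = (\Sigma,J,D,\{x_k,u_k,y_k\})$ in particular satisfies $\psi_*\Sigma = \Sigma$, so $\mathcal{K}_{n,m,p}(\Omega)$ is literally a subgroup of $\mathcal{K}_{n,m,p}(\Sigma)$; it is closed (it is cut out inside $\mathcal{K}_{n,m,p}(\Sigma)$ by the additional closed conditions $\psi_*J = J$, $\psi_*D = D$, and preservation of the trajectory), hence an embedded Lie subgroup, and dimension is monotone under inclusion of embedded submanifolds. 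This inequality is essentially immediate; the paper already remarks that $\mathcal{K}_{n,m,p}(\Omega) \subseteq \mathcal{K}_{n,m,p}(\Sigma)$.

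For the second inequality I would use the discussion immediately preceding the lemma: given $\psi = (P,F,G,S) \in \mathcal{K}_{n,m,p}(\Sigma)$, the triple $(P,F,G)$ satisfies $\phi_*(A,B) = (A,B)$ because the first two components of the action \eqref{eq:psi} agree with \eqref{eq:sub_isomorph}, so the forgetful map $(P,F,G,S) \mapsto (P,F,G)$ sends $\mathcal{K}_{n,m,p}(\Sigma)$ into $\mathcal{K}_{n,m}(A,B)$. This map is smooth and a group homomorphism; it is injective because, as noted, $C$ has linearly independent rows so the equation $C = SCP^{-1}$ determines $S$ uniquely from $P$. Thus $\mathcal{K}_{n,m,p}(\Sigma)$ is (isomorphic to) the Lie subgroup $\mathcal{Q}(A,B,C) \subseteq \mathcal{K}_{n,m}(A,B)$, and again monotonicity of dimension under the inclusion of this embedded subgroup gives $\text{dim }\mathcal{K}_{n,m,p}(\Sigma) \leq \text{dim }\mathcal{K}_{n,m}(A,B)$; finally Proposition~\ref{th:dimension} supplies the closed-form value of the right-hand side. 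The main thing to be careful about — the only real obstacle — is justifying that $\mathcal{Q}(A,B,C)$ is an \emph{embedded} submanifold of $\mathcal{K}_{n,m}(A,B)$ (rather than just a subgroup of a set), so that "$\leq$" between dimensions is legitimate; this follows because the injective immersion $\mathcal{K}_{n,m,p}(\Sigma) \hookrightarrow \mathcal{K}_{n,m}(A,B)$ has closed image (it is the solution set of the closed condition $\text{Im }C^T \subseteq \text{Im }P^{-T}C^T$ together with $S$ being the induced solution), and a closed subgroup of a Lie group is embedded. Alternatively, one can avoid the embedding subtlety entirely by noting that an injective smooth group homomorphism of Lie groups cannot increase dimension, which is all that is needed.
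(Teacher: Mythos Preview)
Your proposal is correct and follows essentially the same approach as the paper: the paper does not give a separate proof environment for this lemma but rather treats the discussion immediately preceding it as the argument, noting that $\mathcal{K}_{n,m,p}(\Omega)\subseteq\mathcal{K}_{n,m,p}(\Sigma)$ because the stabilizer of $\Omega$ must preserve the dynamics, and that the forgetful map $(P,F,G,S)\mapsto(P,F,G)$ identifies $\mathcal{K}_{n,m,p}(\Sigma)$ with the subset $\mathcal{Q}(A,B,C)\subseteq\mathcal{K}_{n,m}(A,B)$ (injectivity coming from the full row rank of $C$). Your version is somewhat more careful than the paper in justifying why the objects are Lie groups via the closed subgroup theorem and why dimension is monotone under these inclusions, but the underlying argument is the same.
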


Let us consider a special case, in which the dimension of $\mathcal{K}_{n,m,p}(\Sigma)$ can be computed exactly.
\begin{definition}
A system $\Sigma \in \mathcal{S}_{n,m,p}$ is said to be a prime system if it is $\sim_{\mathcal{G}}$-equivalent to the system of the form:
\begin{equation}
\label{eq:prime_sys}
\begin{aligned}
\Sigma : \quad
\end{aligned}
\begin{cases}
x^{(i,1)}_{k+1} = x^{(i,2)}_k, \\
\vdots \\
x^{(i,\kappa_i)}_{k+1} = u^{(i)}_k, \\
y^{(i)}_k=x^{(i,1)}_{k}, \enspace 1 \leq i \leq m,\\
\end{cases}
\end{equation}
where $x_k = \begin{bmatrix} x^{(1,1)}_k, ..., x^{(1,\kappa_1)}_k, ..., x^{(m,1)}_k, ..., x^{(m,\kappa_m)}_k \end{bmatrix}^T \in \R^n$ and $\{ \kappa_i \}_{i=1}^m$ are controllability indices of $(A,B)$.
\end{definition}
For prime systems we have the following characterization of the dimension of $\mathcal{K}_{n,m,p}(\Sigma)$.
\begin{lemma}
\label{th:dimension_prime}
Let $\Sigma \in \mathcal{S}_{n,m,p}$ be a prime system. Then,
\begin{align}
\text{dim }\mathcal{K}_{n,m,p}(\Sigma) = \sum_{i=1}^m r_{\kappa_i} + m,
\end{align}
where
\begin{equation}
\nonumber
\begin{aligned}
r_1 &= \text{rank }B, \\
r_i &= \text{rank }S_{i-1}(A,B) - \text{rank }S_{i-2} (A,B),\enspace i = 2, ..., m, \\
S_j&(A,B) = \begin{bmatrix} B & AB & ... & A^{j} B \end{bmatrix}, \enspace j = 1, ..., m -1,
\end{aligned}
\end{equation}
and $\{ \kappa_i \}_{i=1}^m$ are controllability indices of $(A,B)$.
\end{lemma}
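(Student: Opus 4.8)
The plan is to reduce to the prime normal form \eqref{eq:prime_sys} and then compute the dimension of the corresponding stabilizer by combining Proposition \ref{th:dimension} with the output constraint. First note that $\text{dim }\mathcal{K}_{n,m,p}(\Sigma)$ is a $\mathcal{G}_{n,m,p}$-invariant: if $\tilde{\Sigma}=\psi_*\Sigma$ then $\mathcal{K}_{n,m,p}(\tilde{\Sigma})=\psi\,\mathcal{K}_{n,m,p}(\Sigma)\,\psi^{-1}$, a subgroup conjugate to $\mathcal{K}_{n,m,p}(\Sigma)$ inside the Lie group $\mathcal{G}_{n,m,p}$ (Lemma \ref{th:lie_group}), hence diffeomorphic to it; moreover the controllability indices $\{\kappa_i\}$, and therefore the $r_i$, are $\mathcal{G}_{n,m,p}$-invariant. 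Since by definition a prime system is $\sim_{\mathcal{G}}$-equivalent to \eqref{eq:prime_sys}, I may assume $\Sigma=(A,B,C)$ is exactly in that normal form. Next, recall from the discussion preceding Lemma \ref{th:dimension_bound} that forgetting $S$ is a bijection between $\mathcal{K}_{n,m,p}(\Sigma)$ and the subset $\mathcal{Q}(A,B,C)\subseteq\mathcal{K}_{n,m}(A,B)$ of those $(P,F,G)$ for which $C=SCP^{-1}$ is solvable in $S$; since $C$ has full row rank, $S$ is then unique and depends smoothly (rationally, by Cramer's rule) on $P$, so this bijection is a diffeomorphism and $\text{dim }\mathcal{K}_{n,m,p}(\Sigma)=\text{dim }\mathcal{Q}(A,B,C)$. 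Finally, again using that $C$ has full row rank, solvability of $C=SCP^{-1}$ is equivalent to $\text{Im }C^T=\text{Im }P^{-T}C^T$, i.e. to $\ker C$ being $P$-invariant.

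For the normal form \eqref{eq:prime_sys}, $\ker C$ is the span of the non-head basis vectors $\{e^{(i,s)}:1\le i\le m,\ 2\le s\le\kappa_i\}$. Using Respondek's explicit description of $\mathcal{K}_{n,m}(A,B)$ for a Brunovsky pair (\cite{respondek_sym}, \cite{dimension}), $P$ has a block form $P=(P_{ij})$ adapted to the chain decomposition, $P_{ij}\in\R^{\kappa_i\times\kappa_j}$, with each block constrained to an explicit shift-Toeplitz shape whose free parameters can be read off directly (alternatively one may re-derive this from the equations $PBG^{-1}=B$ and $PA-AP=BF$). The requirement that $\ker C$ be $P$-invariant is precisely that in each block $P_{ij}$ the head row (indexed by $x^{(i,1)}$) vanishes outside the head column (indexed by $x^{(j,1)}$). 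One then checks that for a pair of chains of equal length these entries are already zero, so no new constraint arises, whereas for $\kappa_i\ne\kappa_j$ exactly $\max(\kappa_j-\kappa_i,0)$ of the head-row entries of $P_{ij}$ are among the free parameters and are killed independently by the output condition; summing over ordered pairs gives
\[
\text{dim }\mathcal{Q}(A,B,C)=\text{dim }\mathcal{K}_{n,m}(A,B)-\sum_{i<j}|\kappa_i-\kappa_j|.
\]
Equivalently, one can describe $\mathcal{Q}(A,B,C)$ outright as the group of block matrices whose block from chain $j$ into chain $i$ vanishes when $\kappa_j<\kappa_i$, with the remaining blocks carrying one parameter each together with a general-linear factor among chains of a common length, and count parameters.

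The claimed formula then follows by a purely combinatorial computation. Starting from Proposition \ref{th:dimension} in the form $\text{dim }\mathcal{K}_{n,m}(A,B)=m(n+1)-\sum_{i\ge2}r_{i-1}r_i$, and using that $r_i=\#\{l:\kappa_l\ge i\}$ is the conjugate partition of $(\kappa_l)_{l=1}^m$ (whence $\sum_{i\ge2}r_{i-1}r_i=\sum_{l,l'}\min(\kappa_l,\kappa_{l'}-1)$), together with $mn=\sum_{l,l'}\kappa_{l'}$ and $\sum_l r_{\kappa_l}=\#\{(l,l'):\kappa_{l'}\ge\kappa_l\}$, one verifies pair-by-pair (grouping $(l,l')$ with $(l',l)$) that $mn-\sum_{l,l'}\min(\kappa_l,\kappa_{l'}-1)-\sum_{i<j}|\kappa_i-\kappa_j|=\sum_l r_{\kappa_l}$, hence $\text{dim }\mathcal{K}_{n,m,p}(\Sigma)=\sum_{i=1}^m r_{\kappa_i}+m$. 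The main obstacle is the middle step: making Respondek's normal form for $\mathcal{K}_{n,m}(A,B)$ precise enough to justify the codimension count $\sum_{i<j}|\kappa_i-\kappa_j|$, the delicate point being several chains sharing the same controllability index, where a full general-linear factor (not merely scalars) survives the output constraint and must be accounted for.
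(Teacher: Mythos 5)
Your proof is correct, but it takes a genuinely different route from the paper's. The paper parametrizes $\mathcal{K}_{n,m,p}(\Sigma)$ by the output map $S$ (invoking Respondek's result that a symmetry of a prime system is determined by $S$), derives the block-triangularity constraint $S_{ij}=0$ for $\kappa_i>\kappa_j$ from the Markov-parameter identity $SCA^qB=\sum_{l}CA^lBFA^{q-l-1}B+CA^qBG$, and then counts the surviving entries of $S$ directly as $\sum_i(r_{\kappa_i}+1)$. You instead parametrize by $(P,F,G)\in\mathcal{K}_{n,m}(A,B)$, whose dimension is already supplied by Proposition \ref{th:dimension}, recast the output condition as $P$-invariance of $\ker C$, and compute its codimension $\sum_{i<j}|\kappa_i-\kappa_j|$ from the Toeplitz block structure of $P$; I checked your combinatorial identity (via $r_i$ being the conjugate partition of the $\kappa_l$, so that $\sum_{i\ge 2}r_{i-1}r_i=\sum_{l,l'}\min(\kappa_l,\kappa_{l'}-1)$ and the leftover telescopes to $\#\{(l,l'):\kappa_{l'}\ge\kappa_l\}=\sum_l r_{\kappa_l}$) and your codimension count against several index patterns, and both are right. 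What your route buys: it makes the existence direction automatic (every element of $\mathcal{Q}(A,B,C)$ yields a symmetry, a converse the paper leaves implicit when it passes from "these $S_{ij}$ must vanish" to the dimension count), and it exhibits explicitly how the two dimension formulas in Proposition \ref{th:dimension} and Lemma \ref{th:dimension_prime} are related. What it costs: the middle step — that $P_{ij}$ is a truncated Toeplitz block generated by a polynomial $p_{ij}(\sigma)$ of degree at most $\kappa_j-\kappa_i$, with $(F,G)$ determined by $P$ — is asserted rather than derived, and you flag this yourself; to make the argument self-contained you would need to write out this normal form (e.g., by solving $PB=BG$ and $PA-AP=BF$ chain by chain), including the case of repeated indices where a full general-linear factor survives. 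Since the paper's own proof is comparably informal at the analogous point (it verifies the constraint on $S$ only for a worked example and appeals to generalization), I regard your proposal as a valid alternative proof rather than a gapped one.
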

\begin{proof}
Without loss of generality, let us consider a prime system of the form \eqref{eq:prime_sys}. From Proposition 2 in \cite{respondek_sym}, we can see that if a system is prime, a symmetry $\psi = (P,F,G,S)$ is uniquely defined by a transformation on its outputs (i.e., by transformation $S$).

We want to show that, in order to define a symmetry, transformation $S$ needs to be constructed in such a way that each transformed output $\tilde{y}^{(i)}_k$ is an affine function of outputs $y^{(j)}_k$ with relative degrees greater or equal than that of $y^{(i)}_k$. To simplify notation, we prove this claim for the example with controllability indices $\kappa_1 = \kappa_2 = 2$, $\kappa_3 = 1$, although the employed arguments apply to any prime system:
\begin{alignat}{3}
\nonumber
x^{(1,1)}_{k+1} &= x^{(1,2)}_k  &\qquad  x^{(2,1)}_{k+1} &= x^{(2,2)}_k  &\qquad  x^{(3,1)}_{k+1} &= u^{(3)}_k\\[4 pt]
\label{eq:dynamics}
x^{(1,2)}_{k+1} &= u^{(1)}_k  &  x^{(2,2)}_{k+1} &= u^{(2)}_k &  \\[4 pt]
\nonumber
y^{(1)}_k &= x^{(1,1)}_k  &  y^{(2)}_k &= x^{(2,1)}_k & y_k^{(3)} &= x^{(3,1)}_k. 
\end{alignat}
We will show, by contradiction, that if $S$ produces a transformed output based on outputs of a smaller relative degree, then $S$ cannot be part of a symmetry. In other words, there exist no matrices $P$, $F$, and $G$ such that the quadruple $(P,F,G,S)$ satisfies the equations:
\begin{align}
\label{eq:A}
A &= P(A-BG^{-1}F)P^{-1} \\
\label{eq:B}
B &= PBG^{-1} \\
\label{eq:C}
C &= SCP^{-1}.
\end{align}
Assume that \eqref{eq:A}-\eqref{eq:C} are satisfied and that $S$ contains non-zero elements $S_{ij}$ if $\kappa_i > \kappa_j$ (i.e., the transformed output uses outputs of a smaller relative degree). From \eqref{eq:C}, we have that:
\begin{align}
\label{eq:connect_S}
SCA^{q} B = CPA^{q}B, \quad \forall \ 0 \leq q < \kappa_1.
\end{align}
By using \eqref{eq:A} and \eqref{eq:B}, the following relation can be shown:
\begin{align}
\label{eq:connect_P}
PA = AP+PBG^{-1}F = AP+BF.
\end{align}
Recursively substituting \eqref{eq:connect_P} into \eqref{eq:connect_S} results in:
\begin{align*}
SCA^q B &= C(PA) A^{q-1} B = C(AP + BF) A^{q-1}B\\ &= CBFA^{q-1}B + CAPA^{q-1}B \\
& = CBFA^{q-1}B + CA(PA)A^{q-2}B
 \\&= \hdots \\ &= \sum_{l = 0}^{q-1} CA^l BFA^{q-l-1}B + CA^q PB.
\end{align*}
Equation \eqref{eq:B} implies that $PB = BG$ and, thus, leads to:
\begin{align}
\label{eq:contradiction}
SCA^q B = \sum_{l = 0}^{q-1} CA^l BFA^{q-l-1}B + CA^q BG.
\end{align}
Note that $CA^l B$ is a diagonal matrix such that:
\begin{align}
[CA^l B]_{ii} = 
\begin{cases}
1, \text{ if } \kappa_i = l+1\\
0, \text{ otherwise.}
\end{cases}
\end{align}
In other words, this diagonal matrix marks the indices corresponding to the outputs of equal relative degree. In addition, the expression $FA^{q-l-1}B$ is an $m \times m$ matrix composed out of elements of $F$ (recall that $A$ and $B$ are in the form \eqref{eq:prime_sys}).

The left-hand side of \eqref{eq:contradiction} selects the columns of $S$ corresponding to the outputs of relative degree $\kappa_i = q+1$. For the example in \eqref{eq:dynamics}, taking $q=0$ gives:
\begin{align}
SCB = \begin{bmatrix} 0 & 0 & S_{13} \\ 0 & 0 & S_{23} \\ 0 & 0 & S_{33} \end{bmatrix}. 
\end{align}

The right-hand side of \eqref{eq:contradiction} fills the rows corresponding to the outputs of relative degree smaller or equal than \mbox{$\kappa_i = q+1$} with values from $G$. In case of example in \eqref{eq:dynamics}, the right-hand side, given $q=0$, is:
\begin{align}
CBG = \begin{bmatrix} 0 & 0 & 0 \\ 0 & 0 & 0 \\ \times & \times & \times \end{bmatrix}.
\end{align}

Thus, the equality in \eqref{eq:contradiction}, which was derived using the definition of symmetry, forces $S_{ij}$ to zero if $\kappa_i > \kappa_j$. In the example in \eqref{eq:dynamics}, this leads to $S_{13} = S_{23} = 0$. This contradicts the assumption that $S$ produces a transformed output based on outputs of a smaller relative degree.

This idea can be generalized to any prime system and, therefore, each transformed output $\tilde{y}^{(i)}_k$ can only be an affine function of outputs $y^{(j)}_k$ with relative degrees greater or equal than that of $y^{(i)}_k$.

The number of outputs $y^{(j)}_k$ with a relative degree greater or equal to that of $y^{(i)}_k$ (i.e., greater or equal than $k_i$) is equal to $r_{k_i}$ \cite{dimension}. Therefore, each modified output $y^{(i)}_k$ is an affine function with $r_{k_i}$ arguments and a non-zero constant term, thus leading to the equality:
\begin{align}
\text{dim }\mathcal{K}_{n,m,p} = \sum_{i=1}^{m} (r_{k_i}+1) = \sum_{i=1}^{m} (r_{k_i}) + m.
\end{align}

\end{proof}

\subsubsection{Main results}
\label{quant_privacy}

Consider the scenario from Problem \ref{scen_1}, in which the cloud does not know anything about the system. In this scenario, the plant encodes $\Omega$ using an isomorphism $\psi=(P,F,G,S)$ that can be regarded as a private key used to encode and decode the information exchanged with the cloud. This isomorphism $\psi$ is chosen from $\mathcal{G}_{n,m,p}$, the group of all isomorphisms. 

\begin{proposition}
\label{th:eq_class}
Let $\Omega \in \bar{\mathcal{S}}_{n,m,p}$. Then, under the scenario described in Problem \ref{scen_1}, the cloud cannot distinguish between $\Omega$ and any other system in the uncertainty set $[\Omega]_{\mathcal{G}}$ (i.e., the equivalence class of $\Omega$ defined by the action of $\mathcal{G}_{n,m,p}$) of dimension:
\begin{align}
\text{dim }\mathcal{G}_{n,m,p} - \text{dim }\mathcal{K}_{n,m,p}(\Omega),
\end{align}
if Algorithm \ref{com_protocol} is used.

This implies that the dimension of $[ \Omega ]_{\mathcal{G}}$ is greater or equal than:
\begin{align}
\label{eq:scen_1_bound}
n(n+1)+m^2 +p(p+1) + \sum_{i=2}^m r_{i-1} r_i,
\end{align}
where $r_i$ is given in Lemma \ref{th:dimension}. 

For $\Omega \in \bar{\mathcal{S}}_{n,m,p}$ such that its corresponding $\Sigma \in \mathcal{S}_{n,m,p}$ is prime, this implies that the dimension of $[ \Omega ]_{\mathcal{G}}$ is greater or equal to: 
\begin{align}
\label{eq:scen_1_bound_prime}
n(n+1) + mn +m^2 + p(p+1) - \sum_{i=1}^m r_{\kappa_i},
\end{align}
where $r_{\kappa_i}$ is given in Lemma \ref{th:dimension_prime}.
\end{proposition}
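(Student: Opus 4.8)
The plan is to combine Theorem~\ref{th:confusion} with the orbit--stabilizer theorem for smooth Lie group actions, and then substitute the dimension bounds already established. First I would dispatch the indistinguishability claim. Every element of $[\Omega]_{\mathcal{G}}$ is of the form $\psi_*\Omega$ for some $\psi \in \mathcal{G}_{n,m,p}$, so Theorem~\ref{th:confusion} says that running Algorithm~\ref{com_protocol} on $\Omega$ with key $\psi$ and on $\psi_*\Omega$ with the identity key $\psi_e$ produces identical message transcripts. Hence, from the cloud's viewpoint, every quadruple in $[\Omega]_{\mathcal{G}}$ is equally consistent with what it observes, so the cloud's uncertainty set is precisely the orbit $[\Omega]_{\mathcal{G}}$.

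Next I would identify the orbit as a homogeneous space. By Lemma~\ref{th:lie_group}, $\mathcal{G}_{n,m,p}$ is a Lie group of dimension $n(n+1)+m(n+1)+m^2+p(p+1)$ acting smoothly on the smooth manifold $\bar{\mathcal{S}}_{n,m,p}$ of Lemma~\ref{th:smooth_manifold}. The stabilizer $\mathcal{K}_{n,m,p}(\Omega)$ is a closed subgroup, being the preimage of the point $\{\Omega\}$ under the continuous orbit map $\psi \mapsto \psi_*\Omega$ into the Hausdorff manifold $\bar{\mathcal{S}}_{n,m,p}$. By the closed-subgroup theorem and the standard theory of homogeneous spaces, $\mathcal{G}_{n,m,p}/\mathcal{K}_{n,m,p}(\Omega)$ carries a natural smooth structure of dimension $\dim \mathcal{G}_{n,m,p} - \dim \mathcal{K}_{n,m,p}(\Omega)$, and the induced map $\mathcal{G}_{n,m,p}/\mathcal{K}_{n,m,p}(\Omega) \to \bar{\mathcal{S}}_{n,m,p}$ is an injective immersion whose image is the orbit $[\Omega]_{\mathcal{G}}$. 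Therefore $[\Omega]_{\mathcal{G}}$ is a smooth (immersed) submanifold of dimension $\dim \mathcal{G}_{n,m,p} - \dim \mathcal{K}_{n,m,p}(\Omega)$, which is the first displayed formula. This middle step is the only genuinely nontrivial one: it requires knowing the action is smooth and the stabilizer is closed, both of which are already in hand from Lemmas~\ref{th:lie_group} and~\ref{th:smooth_manifold} and from Hausdorffness of $\bar{\mathcal{S}}_{n,m,p}$.

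Finally I would substitute the bounds. Using $\dim \mathcal{G}_{n,m,p}=n(n+1)+m(n+1)+m^2+p(p+1)$ together with Lemma~\ref{th:dimension_bound} and Proposition~\ref{th:dimension}, which give $\dim\mathcal{K}_{n,m,p}(\Omega)\le \dim\mathcal{K}_{n,m}(A,B)=m(n+1)-\sum_{i=2}^m r_{i-1}r_i$, the orbit dimension is at least $n(n+1)+m(n+1)+m^2+p(p+1)-m(n+1)+\sum_{i=2}^m r_{i-1}r_i$; cancelling $m(n+1)$ yields the bound \eqref{eq:scen_1_bound}. For the prime case, I would instead combine Lemma~\ref{th:dimension_bound} with Lemma~\ref{th:dimension_prime}, which give $\dim\mathcal{K}_{n,m,p}(\Omega)\le\dim\mathcal{K}_{n,m,p}(\Sigma)=\sum_{i=1}^m r_{\kappa_i}+m$; subtracting this from $\dim\mathcal{G}_{n,m,p}=n(n+1)+mn+m+m^2+p(p+1)$ and cancelling the additive $m$ yields \eqref{eq:scen_1_bound_prime}. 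Beyond the orbit--stabilizer step, everything here is elementary arithmetic with the dimension formulas already proved.
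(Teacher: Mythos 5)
Your proposal is correct and follows essentially the same route as the paper: Theorem~\ref{th:confusion} identifies the uncertainty set with the orbit $[\Omega]_{\mathcal{G}}$, the orbit--stabilizer theorem for smooth Lie group actions (which the paper unpacks explicitly via the quotient manifold theorem, the equivariant rank theorem, and injectivity of $\Theta_\Omega$) gives $\dim[\Omega]_{\mathcal{G}}=\dim\mathcal{G}_{n,m,p}-\dim\mathcal{K}_{n,m,p}(\Omega)$, and the two lower bounds follow by the same substitutions of Proposition~\ref{th:dimension}, Lemma~\ref{th:dimension_bound}, and Lemma~\ref{th:dimension_prime}. Your justification that the stabilizer is closed (preimage of a point under the continuous orbit map) is in fact cleaner than the paper's phrasing, and your arithmetic for both \eqref{eq:scen_1_bound} and \eqref{eq:scen_1_bound_prime} checks out.
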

\begin{proof}
From Theorem \ref{th:confusion}, we know that Algorithm \ref{com_protocol} renders isomorphic systems indistinguishable by the cloud. Therefore, the uncertainty set is the set of systems isomorphic to $[\Omega]_{\mathcal{G}}$ - namely, the equivalence class of $\Omega$ defined by the action of $\mathcal{G}_{n,m,p}$.

Let us define a map:
\begin{align*}
\theta_{\Omega}: \mathcal{G}_{n,m,p} &\rightarrow \bar{\mathcal{S}}_{n,m,p}\\
							 \psi &\mapsto \psi_* \Omega.
\end{align*}
Here, $\theta_{\Omega}$ is smooth because, as shown in Lemma \ref{th:lie_group}, $\mathcal{G}_{n,m,p}$ acts smoothly on $\bar{\mathcal{S}}_{n,m,p}$. The stabilizer set can be defined by:
\begin{align*}
\mathcal{K}_{n,m,p}(\Omega) = (\theta_{\Omega})^{-1}(\Omega) = \{ \psi | \psi_* \Omega = \Omega \}.
\end{align*}
Since $\theta_{\Omega}$ and its inverse are smooth and, therefore, continuous, the subgroup $\mathcal{K}_{n,m,p}(\Omega)$ is closed. 

By Theorem 21.17 in \cite{manifolds}, the quotient space $\mathcal{G}_{n,m,p} / \mathcal{K}_{n,m,p}(\Omega)$ is a smooth manifold of dimension $\text{dim }\mathcal{G}_{n,m,p} - \text{dim }\mathcal{K}_{n,m,p}(\Omega)$ such that the quotient map $\pi: \mathcal{G}_{n,m,p} \rightarrow \mathcal{G}_{n,m,p}/\mathcal{K}_{n,m,p}(\Omega)$ is a smooth submersion.

Now, let us define a map:
\begin{align*}
\Theta_{\Omega}: \mathcal{G}_{n,m,p}/\mathcal{K}_{n,m,p}(\Omega) &\rightarrow \bar{\mathcal{S}}_{n,m,p}\\ 
							 \psi \mathcal{K}_{n,m,p}(\Omega) &\mapsto \psi_* \Omega,
\end{align*}
where $\psi \mathcal{K}_{n,m,p}(\Omega)$ is a left coset of $\mathcal{K}_{n,m,p}(\Omega)$. It can be shown that $\Theta_{\Omega}$ is well-defined. 


By Theorem 4.29 in \cite{manifolds}, $\Theta_{\Omega}$ is smooth because $\theta_{\Omega}=\Theta_{\Omega} \circ \pi$ is smooth and $\pi$ is a smooth submersion.

It can be shown that the map $\Theta_{\Omega}$ is equivariant (see \cite[p. 164]{manifolds}) and, therefore, by the equivariant rank theorem \cite[p. 165]{manifolds}, we have that $\Theta_{\Omega}$ has a constant rank.

Let us show that $\Theta_{\Omega}$ is injective. If $\Theta_{\Omega}(\psi_1 \mathcal{K}_{n,m,p}(\Omega)) = \Theta_{\Omega}(\psi_2 \mathcal{K}_{n,m,p}(\Omega))$, then $(\psi_1)_* \Omega = (\psi_2)_* \Omega$. This implies that $(\psi_1)^{-1} \psi_2 \in \mathcal{K}_{n,m,p}(\Omega)$ and, therefore, $\psi_1 \mathcal{K}_{n,m,p}(\Omega) = \psi_2 \mathcal{K}_{n,m,p}(\Omega)$. Therefore, $\Theta_{\Omega}$ is a smooth immersion.

By Proposition 5.18 in \cite{manifolds}, the image of $\Theta_{\Omega}$ (i.e., the equivalence class $[\Omega]_{\mathcal{G}}$) is an immersed submanifold such that $\Theta_{\Omega}: \mathcal{G}_{n,m,p}/\mathcal{K}_{n,m,p}(\Omega) \rightarrow [\Omega]_{\mathcal{G}}$ is a diffeomorphism and, therefore, the dimension of $[\Omega]_{\mathcal{G}}$ is equal to the dimension of $\mathcal{G}_{n,m,p}/\mathcal{K}_{n,m,p}(\Omega)$.



A more concrete quantification of privacy can be given for various special cases. Using the results of Proposition \ref{th:dimension} and Lemma \ref{th:dimension_bound}, we have that, for any $\Omega \in \bar{\mathcal{S}}_{n,m,p}$, the uncertainty sets under the scenario described in Problem \ref{scen_1} are smooth manifolds of dimension greater or equal to the value in \eqref{eq:scen_1_bound}

The dimension of the uncertainty sets can be shown to be greater or equal to the value in \eqref{eq:scen_1_bound_prime} using Lemma \ref{th:dimension_prime}.
\end{proof}

Proposition \ref{th:eq_class} can be used to quantify privacy of other scenarios presented in Section \ref{problem}.

Consider the scenario in Problem \ref{scen_2}, where the cloud does not know the dynamics but knows which sensors and actuators will be used. An arbitrary isomorphism can no longer be used for encoding since it could lead to inputs and outputs that are inconsistent with existing sensors and actuators. This inconsistency would signal the cloud that the plant is being dishonest about its measurements and provide the cloud with an opportunity to exploit this fact to gather additional knowledge. Therefore, we need to restrict the group of isomorphisms used for encoding. These isomorphisms are given by any composition of $\psi_1 = (P,0,I,I)$ for any $P \in GL(n, \R)$ and $\psi_2 \in \mathcal{K}_{n,m,p}(\Sigma)$. It can be shown that this set of isomorphisms forms a subgroup that we denote by $\mathcal{H}_{n,m,p}(\Sigma) \subset \mathcal{G}_{n,m,p}$. 

\begin{corollary}
\label{th:scen_2}
Let $\Omega \in \bar{\mathcal{S}}_{n,m,p}$. Then, under the scenario described in Problem \ref{scen_2}, the cloud cannot distinguish between $\Omega$ and any other system in the uncertainty set $[\Omega]_{\mathcal{H}}$ (i.e., the equivalence class of $\Omega$ defined by the action of $\mathcal{H}_{n,m,p}$) of dimension:
\begin{align}
\text{dim }\mathcal{H}_{n,m,p}(\Sigma) - \text{dim } \mathcal{K}_{n,m,p}(\Omega),
\end{align}
if Algorithm \ref{com_protocol} is used. This implies that the dimension of $[\Omega]_{\mathcal{H}}$ is greater or equal to $n(n+1)$.
\end{corollary}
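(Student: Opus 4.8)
The plan is to mirror the proof of Proposition~\ref{th:eq_class}, but with the group $\mathcal{G}_{n,m,p}$ replaced by the subgroup $\mathcal{H}_{n,m,p}(\Sigma)$. First I would observe that $\mathcal{H}_{n,m,p}(\Sigma)$, being generated by the maps $\psi_1=(P,0,I,I)$ with $P\in GL(n,\R)$ together with the symmetry subgroup $\mathcal{K}_{n,m,p}(\Sigma)$, is a closed subgroup of the Lie group $\mathcal{G}_{n,m,p}$, hence itself a Lie group acting smoothly on $\bar{\mathcal{S}}_{n,m,p}$ (the action is the restriction of the action of $\mathcal{G}_{n,m,p}$). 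Then, exactly as in Proposition~\ref{th:eq_class}, I would form the orbit map $\theta_\Omega:\mathcal{H}_{n,m,p}(\Sigma)\to\bar{\mathcal{S}}_{n,m,p}$, $\psi\mapsto\psi_*\Omega$, whose fiber over $\Omega$ is the stabilizer of $\Omega$ inside $\mathcal{H}_{n,m,p}(\Sigma)$; since $\mathcal{K}_{n,m,p}(\Omega)\subset\mathcal{K}_{n,m,p}(\Sigma)\subset\mathcal{H}_{n,m,p}(\Sigma)$, this stabilizer is exactly $\mathcal{K}_{n,m,p}(\Omega)$. The quotient-manifold theorem then gives that $\mathcal{H}_{n,m,p}(\Sigma)/\mathcal{K}_{n,m,p}(\Omega)$ is a smooth manifold of dimension $\text{dim }\mathcal{H}_{n,m,p}(\Sigma)-\text{dim }\mathcal{K}_{n,m,p}(\Omega)$, and the induced map $\Theta_\Omega$ onto the orbit $[\Omega]_{\mathcal{H}}$ is an equivariant injective smooth immersion, hence a diffeomorphism onto its image; this identifies $[\Omega]_{\mathcal{H}}$ as a manifold of the claimed dimension. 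Theorem~\ref{th:confusion} supplies the indistinguishability claim, since the isomorphisms from $\mathcal{H}_{n,m,p}(\Sigma)$ used by the plant still produce the same exchanged messages for any two systems in the same $\mathcal{H}$-orbit.

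It then remains to derive the numerical bound $\text{dim }[\Omega]_{\mathcal{H}}\geq n(n+1)$. The key step here is to compute (or lower-bound) $\text{dim }\mathcal{H}_{n,m,p}(\Sigma)$. Because $\mathcal{H}_{n,m,p}(\Sigma)$ is the subgroup generated by the $n^2$-dimensional family $\{(P,0,I,I):P\in GL(n,\R)\}$ and the symmetry subgroup $\mathcal{K}_{n,m,p}(\Sigma)$, and these two pieces intersect in the symmetries of $\Sigma$ whose $P$-component is a state-coordinate change fixing $\Sigma$ (with $F=0$, $G=I$, $S=I$), a dimension count of the product/quotient gives $\text{dim }\mathcal{H}_{n,m,p}(\Sigma)=n^2+\text{dim }\mathcal{K}_{n,m,p}(\Sigma)-\text{dim }\bigl(\text{intersection}\bigr)$. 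I would then combine this with $\text{dim }[\Omega]_{\mathcal{H}}=\text{dim }\mathcal{H}_{n,m,p}(\Sigma)-\text{dim }\mathcal{K}_{n,m,p}(\Omega)$ and the inclusion $\mathcal{K}_{n,m,p}(\Omega)\subset\mathcal{K}_{n,m,p}(\Sigma)$ (Lemma~\ref{th:dimension_bound}) to cancel the $\mathcal{K}_{n,m,p}(\Sigma)$-contributions against the intersection term, leaving a remainder of at least $n^2$; adding the contribution from the affine (translation) part of $P$, which accounts for an extra $n$ dimensions, yields $n(n+1)$. Concretely, the subfamily of pure state coordinate changes $(P,0,I,I)$ that are \emph{not} symmetries already contributes a submanifold of $[\Omega]_{\mathcal{H}}$ of dimension $n(n+1)-\text{dim}\{P: (P,0,I,I)\in\mathcal{K}_{n,m,p}(\Sigma)\}$, and one checks this stabilizer-of-$\Sigma$-under-$P$-alone has dimension $0$ for generic $\Sigma$ and in general is absorbed, giving the stated lower bound.

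The main obstacle I anticipate is the bookkeeping in the dimension count: one must be careful that the generators $(P,0,I,I)$ and the elements of $\mathcal{K}_{n,m,p}(\Sigma)$ do not ``double count'' and that the stabilizer $\mathcal{K}_{n,m,p}(\Omega)$ is correctly subtracted only once. The cleanest route is probably to exhibit an explicit $n(n+1)$-dimensional slice: restrict attention to isomorphisms of the form $(P,0,I,I)$ (so $P$ ranges over the $n(n+1)$-dimensional group of affine invertible maps on $\R^n$ in the lifted representation), note that two such isomorphisms send $\Omega$ to the same quadruple only if they differ by an element of $\mathcal{K}_{n,m,p}(\Omega)$ that also has the form $(P',0,I,I)$, and argue that the set of such $P'$ has dimension bounded by $\text{dim }\mathcal{K}_{n,m,p}(\Sigma)$ minus the contribution of the feedback/input/output components; since those components are exactly what is \emph{removed} when passing from $\mathcal{G}$ to $\mathcal{H}$, the net effect is that the orbit of $\Omega$ under this restricted family already has dimension at least $n(n+1)$, and enlarging the acting group to all of $\mathcal{H}_{n,m,p}(\Sigma)$ can only increase the orbit dimension. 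This avoids the need to compute $\text{dim }\mathcal{H}_{n,m,p}(\Sigma)$ exactly.
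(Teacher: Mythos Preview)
Your first part — rerunning the orbit/quotient argument of Proposition~\ref{th:eq_class} with $\mathcal{H}_{n,m,p}(\Sigma)$ in place of $\mathcal{G}_{n,m,p}$ and invoking Theorem~\ref{th:confusion} for indistinguishability — is exactly what the paper does.

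For the lower bound $n(n+1)$, however, you are working much harder than the paper. The paper's argument is a two-line computation: it asserts that $\mathcal{H}_{n,m,p}(\Sigma)$ is (as a manifold) the product of $\mathcal{K}_{n,m,p}(\Sigma)$ with the $n(n+1)$-dimensional group of invertible affine maps $(P,0,I,I)$, so that
\[
\text{dim }\mathcal{H}_{n,m,p}(\Sigma)=\text{dim }\mathcal{K}_{n,m,p}(\Sigma)+n(n+1),
\]
and then invokes Lemma~\ref{th:dimension_bound} ($\text{dim }\mathcal{K}_{n,m,p}(\Omega)\le\text{dim }\mathcal{K}_{n,m,p}(\Sigma)$) to subtract, giving $\text{dim }[\Omega]_{\mathcal{H}}\ge n(n+1)$ immediately. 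Your worry about the intersection of $\{(P,0,I,I)\}$ with $\mathcal{K}_{n,m,p}(\Sigma)$ is in principle legitimate (and one could argue the paper glosses over why that intersection is discrete — for controllable $(\bar A,\bar B)$ the condition $PB=B$, $PA=AP$ forces the linear part of $P$ to be the identity), but the paper simply takes the product decomposition as given and proceeds. Your alternative ``slice'' argument also works and is arguably more honest, but it buys you nothing extra here; if you want to match the paper, drop the intersection bookkeeping, state the product-manifold dimension formula for $\mathcal{H}_{n,m,p}(\Sigma)$, and apply Lemma~\ref{th:dimension_bound} directly. Also note that $P$ is an invertible \emph{affine} map throughout (dimension $n(n+1)$ from the start, not $n^2$ plus a later correction).
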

\begin{proof}
From Theorem \ref{th:confusion}, we know that Algorithm \ref{com_protocol} renders isomorphic systems indistinguishable by the cloud. However, the uncertainty set is no longer the equivalence class under the entire group of isomorphisms $\mathcal{G}_{n,m,p}$, but the equivalence class under a smaller group $\mathcal{H}_{n,m,p}(\Sigma)$ denoted by $[\Omega]_{\mathcal{H}}$.

It can be shown that $\mathcal{H}_{n,m,p}(\Sigma)$ is a Lie subgroup of $\mathcal{G}_{n,m,p}$. This subgroup $\mathcal{H}_{n,m,p}(\Sigma)$ can be thought of as a product manifold of $\mathcal{K}_{n,m,p}(\Sigma)$ and a space of invertible affine maps. Since the dimension of a product manifold is a sum of its factors' dimensions, we have: $$\text{dim }\mathcal{H}_{n,m,p}(\Sigma) = \text{dim }\mathcal{K}_{n,m,p}(\Sigma) + n(n+1).$$ The result follows by applying Proposition \ref{th:eq_class} to $\mathcal{H}_{n,m,p}(\Sigma)$. Using the result from Lemma \ref{th:dimension_bound}, we can see that the dimension of the uncertainty set for any $\Omega \in \bar{\mathcal{S}}_{n,m,p}$ is greater or equal to $n(n+1)$.
\end{proof}

Finally, in the scenario described in Problem \ref{scen_3}, where the cloud possesses the complete knowledge of dynamics, only the isomorphisms from the symmetry subgroup $\psi \in \mathcal{K}_{n,m,p}(\Sigma)$ can be used. To provide privacy guarantees for this scenario, let us assume that we have $n+1$ linearly independent constraints on the state $x_k$ expressed by the constraint matrix $D$. This is a reasonable assumption because systems often have an operational envelope bounding the states. Therefore, any $\psi \in \mathcal{K}_{n,m,p}(\Omega)$ must satisfy:
\begin{equation}
\nonumber
\begin{aligned}
DL^{-1} = D & \Longleftrightarrow DL = D \\ & \Longleftrightarrow \begin{bmatrix}D_{11} & 0 \\ D_{21} & D_{22} \end{bmatrix} \begin{bmatrix} P & 0 \\ F & G \end{bmatrix} = \begin{bmatrix}D_{11} & 0 \\ D_{21} & D_{22} \end{bmatrix}\\  &\Longrightarrow D_{11} P = D_{11}.
\end{aligned}
\end{equation}
Given that $D_{11} \in \R^{h_1 \times (n+1)}$ is injective, the last equality is satisfied if and only if $P=I$. Since $P$ uniquely defines $F$, $G$ and $S$, we also have that the only isomorphism that keeps $(A,B,C,D_{11})$ invariant is $\psi = \psi_e=(I,0,I,I)$ . Therefore, the only element of $\mathcal{K}_{n,m,p}(\Omega)$ is $\phi_{e} = (I,0,I,I)$ and $\text{dim }\mathcal{K}_{n,m,p}(\Omega) = 0$.

\begin{corollary}
\label{th:scen_3}
Let $\Omega \in \bar{\mathcal{S}}_{n,m,p}$. Then, under the scenario described in Problem \ref{scen_3}, the cloud cannot distinguish between $\Omega$ and any other system in the uncertainty set $[\Omega]_{\mathcal{K}}$ (i.e., the equivalence class of $\Omega$ defined by the action of $\mathcal{K}_{n,m,p}$) of dimension:
\begin{align}
\text{dim }\mathcal{K}_{n,m,p}(\Sigma) - \text{dim } \mathcal{K}_{n,m,p}(\Omega),
\end{align}
if Algorithm \ref{com_protocol} is used.

When the constraint matrix $D$ contains $n+1$ linearly independent constraints on the state, the dimension of the uncertainty set is equal to $\text{dim }\mathcal{K}_{n,m,p}(\Sigma)$, which is greater or equal to: $$m(n+1) - \sum_{i=2}^{m} r_{i-1}r_i$$.

Moreover, for any $\Omega \in \bar{\mathcal{S}}_{n,m,p}$ such that its corresponding $\Sigma \in \mathcal{S}_{n,m,p}$ is prime, the dimension of $[\Omega]_{\mathcal{K}}$ is equal to $$\sum_{i=1}^{m}r_{k_i} + m$$.
\end{corollary}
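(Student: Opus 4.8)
The plan is to rerun the argument of Proposition \ref{th:eq_class}, but with the full isomorphism group $\mathcal{G}_{n,m,p}$ replaced by the symmetry subgroup $\mathcal{K}_{n,m,p}(\Sigma)$, and then to specialize under the stated constraint hypothesis. The first point to settle is \emph{which} isomorphisms the plant may legitimately use in Scenario 3: since the cloud already knows $(A,B,C)$, any encoding isomorphism $\psi$ with $\psi_*\Sigma\neq\Sigma$ would be immediately detectable, so only symmetries $\psi\in\mathcal{K}_{n,m,p}(\Sigma)$ are admissible. Theorem \ref{th:confusion} then identifies the uncertainty set seen by the cloud with the orbit $[\Omega]_{\mathcal{K}}=\{\psi_*\Omega\mid\psi\in\mathcal{K}_{n,m,p}(\Sigma)\}$, so it remains to compute the dimension of this orbit.

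The steps I would carry out are as follows. First, I would check that $\mathcal{K}_{n,m,p}(\Sigma)$ is a closed Lie subgroup of $\mathcal{G}_{n,m,p}$ acting smoothly on $\bar{\mathcal{S}}_{n,m,p}$: it is the preimage of $\Sigma$ under the continuous map $\psi\mapsto\psi_*\Sigma$ (the same reasoning used in Proposition \ref{th:eq_class} to show $\mathcal{K}_{n,m,p}(\Omega)$ is closed), hence closed, and the closed subgroup theorem makes it an embedded Lie subgroup; smoothness of the restricted action is inherited from Lemma \ref{th:lie_group}. Second, restricting the orbit map to $\theta_\Omega:\mathcal{K}_{n,m,p}(\Sigma)\to\bar{\mathcal{S}}_{n,m,p}$, $\psi\mapsto\psi_*\Omega$, its fibre over $\Omega$ is $\mathcal{K}_{n,m,p}(\Omega)\cap\mathcal{K}_{n,m,p}(\Sigma)=\mathcal{K}_{n,m,p}(\Omega)$, using the inclusion $\mathcal{K}_{n,m,p}(\Omega)\subseteq\mathcal{K}_{n,m,p}(\Sigma)$ noted before Lemma \ref{th:dimension_bound}; this subgroup is closed in $\mathcal{K}_{n,m,p}(\Sigma)$ because it is closed in $\mathcal{G}_{n,m,p}$. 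Third, I would transcribe the quotient-manifold theorem, the equivariant rank theorem, and the injective-immersion argument from the proof of Proposition \ref{th:eq_class} verbatim, with $\mathcal{G}_{n,m,p}$ replaced by $\mathcal{K}_{n,m,p}(\Sigma)$, to conclude that $[\Omega]_{\mathcal{K}}$ is an immersed submanifold diffeomorphic to $\mathcal{K}_{n,m,p}(\Sigma)/\mathcal{K}_{n,m,p}(\Omega)$, hence of dimension $\text{dim }\mathcal{K}_{n,m,p}(\Sigma)-\text{dim }\mathcal{K}_{n,m,p}(\Omega)$.

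Finally I would specialize. Under the hypothesis that $D$ carries $n+1$ linearly independent constraints on the state, the computation immediately preceding this corollary shows $\mathcal{K}_{n,m,p}(\Omega)=\{\psi_e\}$, so $\text{dim }\mathcal{K}_{n,m,p}(\Omega)=0$ and $\text{dim }[\Omega]_{\mathcal{K}}=\text{dim }\mathcal{K}_{n,m,p}(\Sigma)$; the estimate in terms of $m(n+1)-\sum_{i=2}^{m}r_{i-1}r_i$ then follows from Proposition \ref{th:dimension} together with Lemma \ref{th:dimension_bound}, and for prime $\Sigma$ Lemma \ref{th:dimension_prime} gives $\text{dim }\mathcal{K}_{n,m,p}(\Sigma)=\sum_{i=1}^{m}r_{\kappa_i}+m$, whence $\text{dim }[\Omega]_{\mathcal{K}}=\sum_{i=1}^{m}r_{\kappa_i}+m$. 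The main obstacle, beyond bookkeeping, is verifying that $\mathcal{K}_{n,m,p}(\Sigma)$ is genuinely a closed Lie subgroup acting smoothly, so that the manifold-theoretic machinery of Proposition \ref{th:eq_class} transfers intact; once that is established, the remainder is a direct transcription of that earlier proof together with the stabilizer computation already performed before the statement.
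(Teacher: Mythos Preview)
Your proposal is correct and follows essentially the same approach as the paper: the paper's proof merely states that the argument is ``similar to that of Corollary \ref{th:scen_2}'' and invokes Proposition \ref{th:dimension} and Lemma \ref{th:dimension_prime} for the explicit dimension counts, which is exactly the program you carry out in detail. Your additional care in verifying that $\mathcal{K}_{n,m,p}(\Sigma)$ is a closed Lie subgroup (via the closed subgroup theorem) so that the orbit-dimension machinery of Proposition \ref{th:eq_class} transfers is a sound elaboration of what the paper leaves implicit.
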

\begin{proof}
The proof of this statement is similar to that of Corollary \ref{th:scen_2}. The dimensions of equivalence classes for prime and general systems were evaluated using results of Proposition \ref{th:dimension} and Lemma \ref{th:dimension_prime}.
\end{proof}

\section{Side knowledge}

The privacy guarantees derived in Section \ref{privacy_sol} are compromised when the adversary has partial information about the encoding isomorphism. In our problem formulation, we assume that the cloud may have learned those through some external channels or through some prior knowledge about the system.

Recall that by Lemma \ref{th:lie_group}, $\mathcal{G}_{n,m,p}$ is a Lie group of dimension $n(n+1)+m(n+1)+m^2 + p(p+1)$. In this section, we assume that the constraint matrix $D$ has $n+1$ linearly independent constraints on the state and, therefore, as shown in the previous section, $\mathcal{K}_{n,m,p}(\Omega) = \{ \psi_e \}$, where $\psi_e$ is the identity element of $\mathcal{G}_{n,m,p}$. 

Suppose the cloud has partial knowledge about the encoding isomorphism. We shall represent the partial knowledge available to the cloud as a projection from $\mathcal{G}_{n,m,p}$ onto a $k$-dimensional vector space. Let us define $\rho: \mathcal{G}_{n,m,p} \rightarrow \R^k$ to be a surjective map of constant rank $k$, providing side knowledge about the encoding isomorphism. Then, we can say that the cloud knows some vector $l \in \R^{k}$, where:
\begin{align}
\label{eq:partial}
l=\rho (P,F,G,S).
\end{align}
Note that this map is not known to us, and the results that follow do not require the knowledge of this map.

Side knowledge does not change the result of Theorem \ref{th:confusion}, however the privacy guaranteed by the scheme changes. It is obvious that the size of the uncertainty set defined by isomorphisms that satisfy \eqref{eq:partial} is no greater and, in general, smaller thanif no side knowledge is available. Moreover, the uncertainty set is no longer neither an orbit nor an equivalence class because the preimage of $\rho$ does not necessarily have a group structure. 


Let us show that the object defined by \eqref{eq:partial} on $\mathcal{G}_{n,m,p}$ is still a manifold.

\begin{lemma}
\label{th:iso_conf}
Let $\mathcal{G}_{n,m,p}$ be the group of all isomorphisms, $\rho: \mathcal{G}_{n,m,p} \rightarrow \R^k$ be a surjective map of constant rank $k$ and assume the cloud knows that $l=\rho(P,F,G,S)$. Then, $\rho^{-1}(l)$, representing the possible encoding isomorphisms used by the client, is a properly embedded submanifold of $\mathcal{G}_{n,m,p}$. Its dimension is $\text{dim } \mathcal{G}_{n,m,p} - k$.
\end{lemma}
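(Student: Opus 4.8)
The statement is essentially the preimage theorem (regular value / constant-rank level set theorem) for manifolds, so the plan is to reduce it to a standard result from \cite{manifolds}. First I would note that by Lemma \ref{th:lie_group}, $\mathcal{G}_{n,m,p}$ is a smooth manifold (indeed a Lie group) of dimension $n(n+1)+m(n+1)+m^2+p(p+1)$. Then I would recall the constant-rank level set theorem (this is the global rank theorem for maps of constant rank, stated as Theorem 5.12 in \cite{manifolds}): if $\rho: \mathcal{G}_{n,m,p} \to \R^k$ is smooth with constant rank equal to $k$ (which is the maximal possible rank into $\R^k$, i.e. $\rho$ is a submersion), then every level set $\rho^{-1}(l)$ is a properly embedded submanifold whose codimension equals the rank, namely $k$. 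Hence $\rho^{-1}(l)$ is a properly embedded submanifold of $\mathcal{G}_{n,m,p}$ of dimension $\dim \mathcal{G}_{n,m,p} - k$, which is exactly the claim.

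In slightly more detail, the argument proceeds in three steps. First, establish that $\rho$ is a smooth submersion: since $\rho$ has constant rank $k$ and its codomain is $\R^k$, its differential $d\rho_\psi: T_\psi \mathcal{G}_{n,m,p} \to \R^k$ is surjective at every point $\psi$, so $\rho$ is a submersion. (Smoothness of $\rho$ is part of the hypothesis, as it is a map "of constant rank" between manifolds, a notion that presupposes smoothness.) Second, invoke the submersion level set theorem: each nonempty fiber $\rho^{-1}(l)$ is an embedded submanifold of codimension $k$, so of dimension $\dim \mathcal{G}_{n,m,p} - k = n(n+1)+m(n+1)+m^2+p(p+1)-k$. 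Third, upgrade "embedded" to "properly embedded": a fiber of a submersion, being closed as the preimage of a point under a continuous map, is automatically a closed (hence properly embedded) submanifold. Surjectivity of $\rho$ guarantees the fiber $\rho^{-1}(l)$ is nonempty, so the statement is non-vacuous for the $l$ that the cloud actually knows.

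There is no real obstacle here; the content is entirely a citation of the constant-rank / submersion level set machinery in differential geometry applied to the Lie group structure already verified in Lemma \ref{th:lie_group}. The only point that deserves a sentence of care is that "constant rank $k$ into $\R^k$" forces the submersion property (so that the level-set theorem applies with codimension $k$ rather than some smaller number), and that closedness of a point-fiber gives properness of the embedding for free. If one wanted to avoid appealing to a map being smooth by hypothesis, one could instead phrase the whole section with $\rho$ assumed smooth from the outset; but as written, "constant rank" already carries smoothness, so no additional assumption is needed.
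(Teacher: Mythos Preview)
Your proposal is correct and follows essentially the same route as the paper: both argue that $\rho$ is a smooth submersion (the paper via the global rank theorem applied to a surjective constant-rank map, you via the equivalent observation that constant rank $k$ into $\R^k$ forces surjectivity of each differential) and then invoke the submersion level set theorem from \cite{manifolds} to conclude that $\rho^{-1}(l)$ is a properly embedded submanifold of codimension $k$. Your added remarks on closedness giving properness and on surjectivity ensuring nonemptiness are sound elaborations but not needed beyond the cited theorem.
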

\begin{proof}
By the global rank theorem \cite[p. 83]{manifolds}, since $\rho$ is a surjective map of constant rank $k$, it is a smooth submersion. From the submersion level set theorem \cite[p. 105]{manifolds}, since both $\mathcal{G}_{n,m,p}$ and $\R^{k}$ are smooth manifolds and $\rho$ is a smooth submersion, we have that $\rho^{-1}(l)$ is a properly embedded submanifold of dimension $\text{dim }\mathcal{G}_{n,m,p} - \text{dim } \R^k = n(n+1)+m(n+1)+m^2 + p(p+1) - k$.
\end{proof}

Let us now consider the map $\Theta_{\Omega}$ defined earlier in Proposition \ref{th:eq_class}. Since $\mathcal{K}_{n,m,p}(\Omega) = \psi_e$, we have that $\mathcal{G}_{n,m,p} / \mathcal{K}_{n,m,p}(\Omega)$ is equivalent to $\mathcal{G}_{n,m,p}$. Therefore, the map $\Theta_{\Omega}$ is equivalent to the orbit map $\theta_{\Omega}$. It was shown in Proposition \ref{th:eq_class} that $\Theta_{\Omega}$ is injective. The image of $\Theta_{\Omega}(\rho^{-1}(l))$ constitutes the uncertainty set, between the elements of which the cloud is not be able to distinguish. Therefore, the main result of this section requires finding the dimension of $\Theta_{\Omega}(\rho^{-1}(l))$.

\begin{proposition}
\label{th:side_knowledge}
Assume $\Omega \in \bar{\mathcal{S}}_{n,m,p}$ is such that the constraint matrix $D$ has $n+1$ linearly independent constraints on the state. Suppose that Algorithm \ref{com_protocol} is used and the cloud has the following side knowledge about the selected isomorphism $\psi$:
$$\rho(P,F,G,S) = l \in \R^k,$$
where $\rho: \mathcal{G}_{n,m,p} \rightarrow \R^k$ is a surjective map of constant rank $k$.
Then, under the scenario described in Problem \ref{scen_1}, the cloud cannot distinguish between $\Omega$ and any other system in the uncertainty set $\mathcal{U}=\Theta_{\Omega}(\rho^{-1}(l) )$ of dimension:
\begin{align}
\text{dim } \mathcal{G}_{n,m,p} - k = n(n+1)+m(n+1)+m^2 + p(p+1) - k.
\end{align}
\end{proposition}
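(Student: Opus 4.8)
The plan is to combine the submanifold structure of $\rho^{-1}(l)$, already established in Lemma \ref{th:iso_conf}, with the properties of the map $\Theta_\Omega$ developed in the proof of Proposition \ref{th:eq_class}. First I would recall that, because $D$ has $n+1$ linearly independent constraints on the state, the discussion preceding Corollary \ref{th:scen_3} gives $\mathcal{K}_{n,m,p}(\Omega) = \{\psi_e\}$; hence $\mathcal{G}_{n,m,p}/\mathcal{K}_{n,m,p}(\Omega) \cong \mathcal{G}_{n,m,p}$ and $\Theta_\Omega$ coincides with the orbit map $\theta_\Omega \colon \mathcal{G}_{n,m,p} \to \bar{\mathcal{S}}_{n,m,p}$, $\psi \mapsto \psi_*\Omega$. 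From Proposition \ref{th:eq_class} we already know that $\Theta_\Omega$ is a smooth injective immersion (the constant-rank/equivariance argument there shows it has rank equal to $\dim\mathcal{G}_{n,m,p}$ everywhere, and injectivity was verified via the stabilizer computation).

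Next I would invoke Lemma \ref{th:iso_conf}: since $\rho$ is a surjective constant-rank-$k$ map, $\rho^{-1}(l)$ is a properly embedded submanifold of $\mathcal{G}_{n,m,p}$ of dimension $\dim\mathcal{G}_{n,m,p} - k = n(n+1)+m(n+1)+m^2+p(p+1)-k$. Then I would restrict $\Theta_\Omega$ to this submanifold. The restriction $\Theta_\Omega|_{\rho^{-1}(l)} \colon \rho^{-1}(l) \to \bar{\mathcal{S}}_{n,m,p}$ is the composition of the inclusion $\iota\colon \rho^{-1}(l) \hookrightarrow \mathcal{G}_{n,m,p}$ (a smooth embedding, hence an injective immersion) with the injective immersion $\Theta_\Omega$; a composition of injective immersions is an injective immersion, so $\Theta_\Omega|_{\rho^{-1}(l)}$ is itself an injective immersion. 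By Proposition 5.18 in \cite{manifolds}, its image $\mathcal{U} = \Theta_\Omega(\rho^{-1}(l))$ is an immersed submanifold and $\Theta_\Omega|_{\rho^{-1}(l)}$ is a diffeomorphism onto $\mathcal{U}$; therefore $\dim\mathcal{U} = \dim\rho^{-1}(l) = \dim\mathcal{G}_{n,m,p} - k$, which is the claimed value.

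Finally, to close the indistinguishability part of the statement, I would appeal to Theorem \ref{th:confusion}: for every $\psi \in \rho^{-1}(l)$ the quadruples $\Omega$ and $\psi_*\Omega$ are isomorphic and hence rendered indistinguishable by Algorithm \ref{com_protocol}, and every $\psi \in \rho^{-1}(l)$ is consistent with the side knowledge $\rho(\psi) = l$; thus the cloud cannot distinguish $\Omega$ from any element of $\mathcal{U}$. I expect the only genuinely delicate point to be the verification that composing an embedding with the injective immersion $\Theta_\Omega$ again yields a map whose image is a well-defined (immersed) submanifold of the correct dimension --- everything else is bookkeeping on top of Lemma \ref{th:iso_conf}, Proposition \ref{th:eq_class}, and Theorem \ref{th:confusion}. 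One subtlety worth a sentence is that $\mathcal{U}$ is in general only an immersed, not embedded, submanifold and need not be a subset with group structure, exactly as remarked before the statement; but this does not affect the dimension count, which is all that is asserted.
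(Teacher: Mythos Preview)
Your proposal is correct and follows essentially the same route as the paper: reduce $\Theta_\Omega$ to the orbit map via $\mathcal{K}_{n,m,p}(\Omega)=\{\psi_e\}$, use Lemma~\ref{th:iso_conf} for $\rho^{-1}(l)$, compose the inclusion (an embedding) with the injective immersion $\Theta_\Omega$, and apply Proposition~5.18 of \cite{manifolds} to conclude the image is an immersed submanifold of dimension $\dim\mathcal{G}_{n,m,p}-k$, with indistinguishability coming from Theorem~\ref{th:confusion}. The only cosmetic difference is that the paper invokes the orbit-map/constant-rank property and the Global Rank Theorem explicitly to justify that $\Theta_\Omega$ is an immersion, whereas you cite this as already established in Proposition~\ref{th:eq_class}.
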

\begin{proof}
By Theorem \ref{th:confusion}, Algorithm \ref{com_protocol} renders isomorphic systems indistinguishable by the cloud. However, the cloud knows that we use an isomorphism $\psi \in \rho^{-1}(l)$ and, therefore, the uncertainty set is no longer the equivalence class under the entire group of isomorphisms $\mathcal{G}_{n,m,p}$, but the subset of this equivalence class $\mathcal{U} = \Theta_{\Omega}(\rho^{-1}(l) )$.

By the property of the orbit map \cite[p. 166]{manifolds}, for each $\Omega$, the orbit map $\Theta_{\Omega}$ is smooth and has constant rank. Since $\Theta_{\Omega}$ is also injective, we have, by the Global Rank Theorem, that it is a smooth immersion \cite[p. 83]{manifolds}. As it was shown in Lemma \ref{th:iso_conf}, the set $\rho^{-1}(l)$ is an embedded submanifold of $\mathcal{G}_{n,m,p}$ and, therefore, the inclusion map $i: \rho^{-1}(l) \rightarrow \mathcal{G}_{n,m,p}$ is a smooth embedding. 

The map $\Theta_{\Omega} \circ i$ is a smooth immersion because it is a composition of smooth immersions. Since images of smooth immersions are smooth immersed submanifolds (by Proposition 5.18 from \cite{manifolds}), the uncertainty set $\mathcal{U} = \Theta_{\Omega}(\rho^{-1}(l))$ is a smooth immersed submanifold of $\bar{\mathcal{S}}_{n,m,p}$ diffeomorphic to $\rho^{-1}(l)$ and, hence, has the same dimension (refer to Lemma \ref{th:iso_conf}).

Using Lemma \ref{th:lie_group}, the dimension of the uncertainty set is evaluated to be:
\begin{align*}
n(n+1)+m(n+1)+m^2 + p(p+1) - k.
\end{align*}
\end{proof}
Remark: although Proposition \ref{th:side_knowledge} was proved under the assumption that $D$ has $n+1$ linearly independent constraints on the state, this assumption can be dropped if we assume the intersection of $\rho^{-1}(l)$ and the left cosets of $\mathcal{K}_{n,m,p}(\Omega)$ in $\mathcal{G}$ is well-behaved.

This result shows that the proposed scheme degrades gracefully with side knowledge --- i.e., side knowledge allows the cloud to reduce the dimension of the uncertainty set only by the amount of side knowledge and not more. Moreover, this result can be generalized for other scenarios considered in Section \ref{quant_privacy} using similar proofs.

\begin{corollary}
Assume $\Omega \in \bar{\mathcal{S}}_{n,m,p}$ is such that the constraint matrix $D$ has $n+1$ linearly independent constraints on the state. Suppose that Algorithm \ref{com_protocol} is used and the cloud has the following side knowledge $l \in \R^k$ about the selected isomorphism $\psi$:
$$l=\rho(P,F,G,S),$$
where $\rho: \mathcal{G}_{n,m,p} \rightarrow \R^k$ is a surjective map of constant rank $k$.
Then, under the scenario described in Problem \ref{scen_2}, the cloud cannot distinguish between $\Omega$ and any other system in the uncertainty set $\mathcal{U}= \Theta_{\Omega}(\rho^{-1}(l) )$ of dimension:
\begin{align}
\text{dim } \mathcal{H}_{n,m,p}(\Sigma) - k.
\end{align}
Under the scenario described in Problem \ref{scen_3}, the dimension of the uncertainty set is:
\begin{align}
\text{dim } \mathcal{K}_{n,m,p}(\Sigma) - k.
\end{align}
\end{corollary}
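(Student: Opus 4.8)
The plan is to replay the proof of Proposition~\ref{th:side_knowledge} almost verbatim, the only change being that the ambient Lie group $\mathcal{G}_{n,m,p}$ is replaced by the smaller group from which the plant is now forced to draw its encoding isomorphism: $\mathcal{H}_{n,m,p}(\Sigma)$ in the setting of Problem~\ref{scen_2} and $\mathcal{K}_{n,m,p}(\Sigma)$ in the setting of Problem~\ref{scen_3}. First I would invoke Theorem~\ref{th:confusion} to conclude, as before, that the cloud cannot tell $\Omega$ apart from any $\psi_*\Omega$ with $\psi$ an admissible encoding isomorphism; combining this with the side knowledge $\rho(\psi)=l$, the uncertainty set becomes $\mathcal{U}=\Theta_\Omega\big(\rho^{-1}(l)\cap\mathcal{H}_{n,m,p}(\Sigma)\big)$ in Scenario~2 and $\mathcal{U}=\Theta_\Omega\big(\rho^{-1}(l)\cap\mathcal{K}_{n,m,p}(\Sigma)\big)$ in Scenario~3. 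Recall from Corollary~\ref{th:scen_2} that $\mathcal{H}_{n,m,p}(\Sigma)$ is a Lie subgroup of $\mathcal{G}_{n,m,p}$ of dimension $\text{dim }\mathcal{K}_{n,m,p}(\Sigma)+n(n+1)$, and that $\mathcal{K}_{n,m,p}(\Sigma)$, being the stabilizer of $\Sigma$ under the smooth action of $\mathcal{G}_{n,m,p}$, is a closed (hence embedded) Lie subgroup.

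Next I would show that $\rho^{-1}(l)\cap\mathcal{H}_{n,m,p}(\Sigma)$ is a properly embedded submanifold of $\mathcal{H}_{n,m,p}(\Sigma)$ of codimension $k$. The restriction $\rho|_{\mathcal{H}_{n,m,p}(\Sigma)}$ is smooth; provided it still has constant rank $k$ along the fiber over $l$ --- equivalently, provided $\mathcal{H}_{n,m,p}(\Sigma)$ meets the fibers of $\rho$ transversally, i.e. $T_\psi\mathcal{H}_{n,m,p}(\Sigma)+\ker d\rho_\psi=T_\psi\mathcal{G}_{n,m,p}$ for $\psi\in\rho^{-1}(l)\cap\mathcal{H}_{n,m,p}(\Sigma)$ --- the submersion level set theorem (exactly as used in Lemma~\ref{th:iso_conf}) gives that $(\rho|_{\mathcal{H}_{n,m,p}(\Sigma)})^{-1}(l)=\rho^{-1}(l)\cap\mathcal{H}_{n,m,p}(\Sigma)$ is properly embedded of dimension $\text{dim }\mathcal{H}_{n,m,p}(\Sigma)-k$. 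The identical reasoning applied to $\rho|_{\mathcal{K}_{n,m,p}(\Sigma)}$ yields a properly embedded submanifold of dimension $\text{dim }\mathcal{K}_{n,m,p}(\Sigma)-k$.

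Finally I would push these submanifolds forward under the orbit map. Since $D$ has $n+1$ linearly independent state constraints, $\mathcal{K}_{n,m,p}(\Omega)=\{\psi_e\}$, so --- exactly as in Proposition~\ref{th:side_knowledge} --- $\Theta_\Omega$ coincides with the orbit map $\theta_\Omega$, which is a smooth immersion and, by the argument in Proposition~\ref{th:eq_class}, injective: if $(\psi_1)_*\Omega=(\psi_2)_*\Omega$ then $\psi_1^{-1}\psi_2\in\mathcal{K}_{n,m,p}(\Omega)=\{\psi_e\}$, and this injectivity persists on any subset of $\mathcal{G}_{n,m,p}$. Composing with the smooth embedding of $\rho^{-1}(l)\cap\mathcal{H}_{n,m,p}(\Sigma)$ into $\mathcal{G}_{n,m,p}$ produces a smooth injective immersion, so by Proposition~5.18 of \cite{manifolds} its image $\mathcal{U}$ is an immersed submanifold of $\bar{\mathcal{S}}_{n,m,p}$ diffeomorphic to $\rho^{-1}(l)\cap\mathcal{H}_{n,m,p}(\Sigma)$, hence $\text{dim }\mathcal{U}=\text{dim }\mathcal{H}_{n,m,p}(\Sigma)-k$; the same argument with $\mathcal{K}_{n,m,p}(\Sigma)$ gives $\text{dim }\mathcal{U}=\text{dim }\mathcal{K}_{n,m,p}(\Sigma)-k$. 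The one delicate point --- and the main obstacle --- is the constant-rank hypothesis on the restrictions $\rho|_{\mathcal{H}_{n,m,p}(\Sigma)}$ and $\rho|_{\mathcal{K}_{n,m,p}(\Sigma)}$: a rank-$k$ submersion on $\mathcal{G}_{n,m,p}$ need not restrict to a rank-$k$ map on a subgroup, so one must either impose the transversality condition explicitly (in the spirit of the remark following Proposition~\ref{th:side_knowledge}) or restrict attention to side-knowledge maps $\rho$ for which it holds; under that proviso the dimension count goes through mechanically.
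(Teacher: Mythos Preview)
Your proposal is correct and follows exactly the approach the paper intends: the paper does not give an explicit proof of this corollary, stating only that ``this result can be generalized for other scenarios \ldots\ using similar proofs,'' i.e.\ one replays Proposition~\ref{th:side_knowledge} with the ambient group replaced by $\mathcal{H}_{n,m,p}(\Sigma)$ or $\mathcal{K}_{n,m,p}(\Sigma)$. You are in fact more careful than the paper in two respects: you make explicit that the uncertainty set is $\Theta_\Omega\big(\rho^{-1}(l)\cap\mathcal{H}_{n,m,p}(\Sigma)\big)$ rather than $\Theta_\Omega(\rho^{-1}(l))$ as the corollary's statement literally reads, and you isolate the transversality hypothesis needed for the restriction $\rho|_{\mathcal{H}_{n,m,p}(\Sigma)}$ (resp.\ $\rho|_{\mathcal{K}_{n,m,p}(\Sigma)}$) to remain a rank-$k$ submersion---a genuine technical gap that the paper does not address and that falls under the same caveat as the remark following Proposition~\ref{th:side_knowledge}.
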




\section{Conclusion}

In this paper, we proposed a transformation-based method to preserve privacy in control over the cloud. In addition to its low computational overhead, we have formally shown that this method precludes the adversary from inferring the private data by eavesdopping on the messages exchanged between the plant and the cloud. We quantified the guaranteed privacy via the dimension of the set that describes the uncertainty experienced by the adversary. The problem of computing the dimension of the stabilizer set $\mathcal{K}_{n,m,p}(\Omega)$ remains open, and its solution requires a detailed analysis of system-theoretic properties. The authors are currently investigating other measures of privacy that may lead to a deeper insight into the proposed method.


\bibliographystyle{abbrv}
\bibliography{paper}
\end{document}